\renewcommand{\(}{\left\(}
\renewcommand{\)}{\right\)}
\renewcommand{\[}{\left\[}
\renewcommand{\]}{\right\]}
\numberwithin{equation}{section}
 \theoremstyle{plain}
\newtheorem{theorem}{Theorem}[section]
\newtheorem{lemma}[theorem]{Lemma}
\newtheorem{remark}[]{Remark}
\newtheorem{definition}[theorem]{Definition}
\newtheorem{corollary}[theorem]{Corollary}
\def\proof{\@ifnextchar[{\@oproof}{\@nproof}}
\def\@oproof[#1][#2]{\trivlist\item[\hskip\labelsep\textit{#2 Proof of\
#1.}~]\ignorespaces}
\def\@nproof{\trivlist\item[\hskip\labelsep\textit{Proof.}~]\ignorespaces}
\begin{document}
\title[Analogues of Harglotz-Zagier-Novikov function]{Analogues of Harglotz-Zagier-Novikov function}

\author{Diksha Rani Bansal}
\address{Diksha Rani\\ Department of Mathematics \\
Indian Institute of Technology Indore \\
Indore, Simrol, Madhya Pradesh, 453552, India.} 
\email{dikshaba1233@gmail.com,  msc2003141006@iiti.ac.in}

\author{Bibekananda Maji}
\address{Bibekananda Maji\\ Discipline of Mathematics \\
Indian Institute of Technology Indore \\
Indore, Simrol, Madhya Pradesh, 453552, India.} 
\email{bibek10iitb@gmail.com, bibekanandamaji@iiti.ac.in}

\author{Pragya Singh}
\address{Pragya Singh \\ Mathematical and Physical Sciences \\ School of Arts and Sciences, Ahmedabad
University \\ 
Ahmedabad, Gujarat, 380009, India.} 
\email{pragysingh1923@gmail.com}

\thanks{2020 \textit{Mathematics Subject Classification.} Primary 30D05; Secondary 33E20.\\
\textit{Keywords and phrases.} Kronecker limit formula, Herglotz-Zagier-Novikov function, Dilogarithm, Polylogarithm, Evaluations of integrals.}

\maketitle

{\it Dedicated to Kalyan Chakraborty and Kotyada Srinivas for their 60th birthdays}

\begin{abstract}
Recently, Choie and Kumar extensively studied the Herglotz-Zagier-Novikov function $\mathcal{F}(z;u,v)$, defined as 
\begin{align*}
\mathcal{F}(z;u,v) = \int_{0}^{1} \frac{\log(1-ut^z)}{v^{-1}-t} dt, \quad \textup{for}  \,\,\,\, \mathfrak{Re}(z)>0.
\end{align*}
They obtained two-term,  three-term and six-term functional equations for $\mathcal{F}(z;u,v)$ and also evaluated special values in terms of di-logarithmic functions.  Motivated from their work,  we study the following two integrals, 
\begin{align*}
\mathcal{F}(z;u,v,w) &=\int_{0}^1 \frac{\log(1-ut^z)\log(1-wt^z)}{v^{-1}-t}\text{d}t,  \\
\mathcal{F}_k(z;u,v) &= \int_{0}^{1} \frac{\log^k(1-ut^z)}{v^{-1}-t} \, \text{d}t,
\end{align*}
for $\mathfrak{Re}(z)>0$ and $k \in \mathbb{N}$.  
For $k=1$,  the integral $\mathcal{F}_k(z;u,v)$ reduces to $\mathcal{F}(z;u,v)$. This allows us to recover the properties of $\mathcal{F}(z;u,v)$ by studying the properties of $\mathcal{F}_k(z;u,v)$. We evaluate special values of these two functions in terms of poly-logarithmic functions.
\end{abstract}

\section{Introduction}
Finding an explicit formula for the constant term in the Laurent series expansion at $s=1$ of Dedekind zeta function is a challenging problem. Kronecker's limit formula answers this problem for Dedekind zeta functions over quadratic imaginary number fields in terms of the logarithmic of Dedekind eta function.   Zagier's \cite{DZ1975} ground breaking exploration of the Kronecker limit formula for real quadratic fields has ignited considerable interest among number theorists.  Novikov \cite{AN1981}, building upon Zagier's work introduced a novel function within the Kronecker limit formula paradigm.  Recently, Choie and Kumar \cite{CK2023} studied the properties of this function which they named as Herglotz-Zagier-Novikov function defined as
\begin{align}\label{HZN}
\mathcal{F}(z;u,v) = \int_{0}^{1} \frac{\log(1-ut^z)}{v^{-1}-t} dt, \quad \textup{for}  \,\,\,\, \mathfrak{Re}(z)>0,
\end{align}
where $ u \in \mathbb{L} := \mathbb{C} \setminus \{(1, \infty) \cup \{0\}\}$ and $v \in \mathbb{L}' := \mathbb{C} \setminus \{[1, \infty) \cup \{0\}\}$. 

This function serves as a unified framework encompassing three distinct functions extensively studied by Herglotz \cite{GH1923},  Zagier \cite{DZ1975}, and Muzaffar Williams \cite{MW2002}. One of these functions is $J(z)$ which interestingly  turns out to be a particular instance of $\mathcal{F}(z;u,v)$. For $\mathfrak{Re}(z) > 0$, we have
\begin{align*}
J(z) = -\mathcal{F}(z;-1,-1) = \int_{0}^{1} \frac{\log(1+t^z)}{1+t}\text{d}t.
\end{align*}
Radchenko and Zagier \cite{RZ23} significantly studied this function in connection with Stark’s conjecture.  
Choie and Kumar \cite{CK2023} further studied attributes of $J(z)$ and gave the following two term functional equation for $J(z)$.
For $\mathfrak{Re}(z)>0$, they showed that
\begin{align}
J(z)+J\left(\frac{1}{z}\right)=\log^2(z). \label{TTJ(z)}
\end{align}
We obtained a relation (see Theorem \ref{ext J(z)}) which allows us to evaluate $J(z)$ in the left half plane. 
We define $\mathbb{D}=\{ z \in \mathbb{C} \backslash \{0\} : |z| \leq 1 \}$ and $\mathbb{D'}=\mathbb{D} \backslash \{1\}$ throughout the paper. Choie and Kumar \cite[Theorem 2.2]{CK2023} also gave a duplication formula for $\mathcal{F}(z;u,v)$ as given below. For $\mathfrak{Re}(z)>0$, $u \in \mathbb{D}$ and $v, v^2 \in \mathbb{D'}$, 
\begin{align}
\mathcal{F}(2z;u^2,v) &= \mathcal{F}(z;u,v) + \mathcal{F}(z;-u,v), \label{eq:dup1}\\
\mathcal{F}\left(\frac{z}{2};u,v^2\right) &= \mathcal{F}(z;u,v) + \mathcal{F}(z;u,-v). \label{eq:dup2}
\end{align}
Further, they \cite[Theorem 2.5]{CK2023} obtained the following exact evaluation of $\mathcal{F}\left(z;u,v\right)$ at rational argument, for $(u,v) \in \mathbb{D}\times \mathbb{D}'$ and $m, n \in \mathbb{N}$,
\begin{align}\label{CK at p/q}
\mathcal{F}\left(\frac{m}{n};u,v\right) = \frac{n}{m}\textup{Li}_2 (u) + \sum_{\alpha^m = 1}\sum_{\beta^n = 1} \left\{\textup{Li}_2 \left(\frac{\beta v^{\frac{1}{n}}}{\beta v^{\frac{1}{n}} - 1}\right) - \textup{Li}_2 \left(\frac{\alpha u^{\frac{1}{m}}-\beta v^{\frac{1}{n}}}{1-\beta v^{\frac{1}{n}}}\right) \right\}.
\end{align}

Now we define an analogue of $\mathcal{F}(z;u,v)$ by  multiplying another $\text{log}$ term in the numerator of $\mathcal{F}(z;u,v)$, that is, 
\begin{align}\label{AHZN-1}
\mathcal{F}(z;u,v,w) :=\int_{0}^1 \frac{\log(1-ut^z)\log(1-wt^z)}{v^{-1}-t}\text{d}t, \quad  \mathfrak{Re}(z) > 0.  
\end{align}
Further, we consider a one-variable generalization of $\mathcal{F}(z;u,v)$ as follows:   
\begin{align}\label{AHZN}
\mathcal{F}_k(z;u,v) &:= \int_{0}^{1} \frac{\log^k(1-ut^z)}{v^{-1}-t} \, \text{d}t, \quad  \mathfrak{Re}(z) > 0, \quad k \in \mathbb{N}. 
\end{align}
Here $ u,w \in \mathbb{L}$ and $v \in \mathbb{L}'$ in both the cases. For $k = 1$, $\mathcal{F}_k(z;u,v)$, becomes $\mathcal{F}(z;u,v)$. Also, for $u=w$, we have $\mathcal{F}(z;u,v,u) = \mathcal{F}_2(z;u,v)$. One of main objectives of this  paper is to study the properties of $\mathcal{F}(z;u,v,w)$ and $\mathcal{F}_k(z;u,v)$.

Before delving deeper, we define the polylogarithm function which will be used throughout the paper.
\begin{definition}[{\bf Polylogarithm function}]
The polylogarithm function, denoted as $\textup{Li}_{s}(z)$, represents a fundamental mathematical concept with a dual representation: it can be expressed both as a power series in $z$ and as a Dirichlet series in $s$. It is defined as
\begin{align}
\textup{Li}_{s}(z) := \sum_{n=1}^{\infty}\frac{z^n}{n^s}.
\end{align}
This definition holds true for every complex $s$ and for any complex argument $z$ where $|z| < 1$; analytic continuation allows it to be extended to $|z| \geq 1$. 
\end{definition}

\section{Main Results}
In this section, we discuss the key results of this paper. Equations \eqref{eq:dup1}, \eqref{eq:dup2} illustrate the duplication formula for $\mathcal{F}(z;u,v)$ provided by Choie and Kumar. Inspired by this, we obtained a generalization of the duplication formula of $\mathcal{F}(z;u,v)$.

\begin{theorem} \label{Dup_HZN_gen}
Let $n$ be any natural number and $\mathfrak{Re}(z)>0$. We have
\begin{align}
 \mathcal{F}(nz;u^n,v)&=\sum_{\alpha^n=1}\mathcal{F}(z;u\alpha,v),  \quad u^n \in \mathbb{D}, v\in \mathbb{D}', \label{Special Value_1}\\
 \mathcal{F}\left(\frac{z}{n};u,v^n\right)&=\sum_{\alpha^n=1}\mathcal{F}(z;u,v\alpha), \quad u \in \mathbb{D}, v^n \in \mathbb{D}'. \label{Special Value_2}
\end{align}
\end{theorem}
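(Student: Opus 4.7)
The plan is to derive both identities from the cyclotomic factorization $1 - X^n = \prod_{\alpha^n = 1}(1 - \alpha X)$ via two standard devices: a logarithmic identity for \eqref{Special Value_1}, and a change of variable combined with a partial-fraction decomposition for \eqref{Special Value_2}.

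For \eqref{Special Value_1}, I would first set $X = u t^z$ in the cyclotomic factorization to obtain $1 - u^n t^{nz} = \prod_{\alpha^n = 1}(1 - \alpha u t^z)$. The crucial point is to upgrade this to the principal-branch logarithmic identity
\begin{align*}
\log(1 - u^n t^{nz}) = \sum_{\alpha^n = 1}\log(1 - \alpha u t^z),
\end{align*}
and not merely an equality modulo $2\pi i\mathbb{Z}$. For $t \in (0,1)$ and $\mathfrak{Re}(z) > 0$, the hypothesis $u^n \in \mathbb{D}$ forces $|u| \le 1$, and therefore $|\alpha u t^z| < 1$ for every $n$-th root of unity $\alpha$. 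Expanding each logarithm as $-\sum_{k\ge 1}w^k/k$ and invoking the orthogonality relation that $\sum_{\alpha^n = 1}\alpha^k$ equals $n$ when $n \mid k$ and $0$ otherwise, both sides collapse to $-\sum_{m \ge 1}(u^n t^{nz})^m/m$. Substituting this pointwise equality into the defining integral \eqref{AHZN-1} under the kernel $(v^{-1} - t)^{-1}$ and interchanging the finite sum over $\alpha$ with the integral yields \eqref{Special Value_1}.

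For \eqref{Special Value_2}, I would start from the left-hand side and perform the substitution $t = s^n$, giving $dt = n s^{n-1}\,ds$ and $t^{z/n} = s^z$. This transforms the integrand into $\log(1 - u s^z)\cdot n s^{n-1}/(v^{-n} - s^n)$. Factoring $v^{-n} - s^n = \prod_{\alpha^n = 1}(v^{-1} - \alpha s)$ and carrying out a partial-fraction decomposition (equivalently, taking the logarithmic derivative of this factorization) yields
\begin{align*}
\frac{n s^{n-1}}{v^{-n} - s^n} = \sum_{\alpha^n = 1}\frac{\alpha}{v^{-1} - \alpha s} = \sum_{\alpha^n = 1}\frac{1}{(v\alpha)^{-1} - s}.
\end{align*}
Inserting this decomposition and swapping the finite sum with the integral produces $\sum_{\alpha^n = 1}\mathcal{F}(z; u, v\alpha)$; the hypothesis $v^n \in \mathbb{D}'$ ensures $v^n \neq 1$, so $v\alpha \neq 1$ for every $n$-th root of unity $\alpha$, and each summand remains admissible.

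The main obstacle, and the only non-routine step, is the branch question in the first identity; once it is resolved by the power-series argument above, the remainder of both proofs reduces to elementary algebra and the interchange of a finite sum with a convergent integral. As a sanity check, specializing to $n = 2$ recovers the Choie--Kumar duplication formulas \eqref{eq:dup1} and \eqref{eq:dup2}.
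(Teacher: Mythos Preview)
Your argument is correct, and for the first identity it coincides with the paper's proof (you are actually more careful than the paper in justifying the principal-branch equality via power series and orthogonality; the paper simply asserts $\log(1-u^nt^{nz})=\sum_{\alpha^n=1}\log(1-\alpha ut^z)$ from the factorization).

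For the second identity your route is the same in spirit but more direct in execution. The paper reaches the partial-fraction step by a composite of three substitutions ($t\mapsto e^{-t}$, then $t\mapsto ny$, then $y\mapsto -\log t$) before applying $\frac{n}{1-Y^n}=\sum_{\beta^n=1}\frac{1}{1-\beta Y}$ with $Y=(vt)^{-1}$; if one tracks these three changes of variable, their net effect is exactly your single substitution $t=s^n$. Your approach therefore collapses the paper's detour through the half-line $(0,\infty)$ into one step, and your logarithmic-derivative derivation of $\frac{ns^{n-1}}{v^{-n}-s^n}=\sum_{\alpha^n=1}\frac{1}{(v\alpha)^{-1}-s}$ is equivalent to the paper's identity \eqref{ABCD}. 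One small slip: you cite \eqref{AHZN-1}, but the relevant defining integral for $\mathcal{F}(z;u,v)$ is \eqref{HZN}.
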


\begin{remark}
Putting $n=2$ in Theorem \ref{Dup_HZN_gen}, we recover the duplication formula \eqref{eq:dup1} and \eqref{eq:dup2}.
\end{remark}

Now we mention a result for the Herglotz function $J(z)$ which gives a connection between the values in the right half plane and the left half plane.
\begin{theorem}\label{ext J(z)}
For $\mathfrak{Re}(z)>0$, we have
 $$ J(-z)=J(z)+\frac{z \pi^2}{12}.$$
\end{theorem}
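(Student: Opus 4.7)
The plan is to evaluate $J(-z)$ directly from the integral representation by reducing the case of a negative-real-part argument back to the positive case via an algebraic identity inside the logarithm. Starting from
\[
J(-z) = \int_{0}^{1} \frac{\log(1+t^{-z})}{1+t}\,\text{d}t,
\]
the idea is to factor $t^{-z}$ out of the logarithm: write $1+t^{-z} = t^{-z}(1+t^{z})$, so $\log(1+t^{-z}) = -z\log t + \log(1+t^{z})$. Substituting this into the integral splits it cleanly into two pieces, one of which is exactly $J(z)$.

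Concretely, I would carry out the following steps in order. First, I would justify the identity $\log(1+t^{-z}) = -z\log t + \log(1+t^{z})$ on the interval $t \in (0,1)$ for $\mathfrak{Re}(z)>0$; the choice of principal branch is consistent since $1+t^{z}$ stays in the right half-plane for $t \in (0,1)$. Second, I would split the integral as
\[
J(-z) \;=\; -z\int_{0}^{1}\frac{\log t}{1+t}\,\text{d}t \;+\; \int_{0}^{1}\frac{\log(1+t^{z})}{1+t}\,\text{d}t \;=\; -z\int_{0}^{1}\frac{\log t}{1+t}\,\text{d}t + J(z).
\]
Third, I would evaluate the elementary constant $\int_{0}^{1}\frac{\log t}{1+t}\,\text{d}t = -\frac{\pi^2}{12}$, either by citing the standard result or by expanding $(1+t)^{-1}$ as a geometric series and using $\int_{0}^{1} t^{n}\log t\,\text{d}t = -1/(n+1)^{2}$, which produces $-\eta(2) = -\pi^{2}/12$. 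Substituting this value gives exactly $J(-z) = J(z) + z\pi^{2}/12$.

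The only subtlety, and therefore the main thing to be careful about, is the branch choice at Step~1: since $\log(1+t^{-z})$ involves $t^{-z}$ which blows up at $t=0$, I want to confirm that writing $\log(t^{-z}(1+t^{z})) = \log(t^{-z}) + \log(1+t^{z})$ is valid with the principal branch on the whole interval. Because $t \in (0,1)$ is positive real and $\mathfrak{Re}(z)>0$, the quantity $t^{-z} = e^{-z\log t}$ has $\log(t^{-z}) = -z\log t$ unambiguously, and $1+t^{z}$ lies in the half-plane $\mathfrak{Re}(\,\cdot\,)>1$, so its principal logarithm is well-defined and the additive splitting holds without any $2\pi i$ correction. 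The integrand near $t=0$ is integrable because $\log t$ is integrable against $(1+t)^{-1}$, which also justifies the splitting of the integral. With these verifications in place, the computation collapses to the two-line argument above.
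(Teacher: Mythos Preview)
Your proposal is correct and follows essentially the same route as the paper: factor $1+t^{-z}=t^{-z}(1+t^{z})$ inside the logarithm, split the integral into $J(z)$ plus $-z\int_0^1 \frac{\log t}{1+t}\,\mathrm{d}t$, and evaluate the latter as $-\pi^2/12$. One small correction to your branch discussion: for $t\in(0,1)$ and $\mathfrak{Re}(z)>0$ one has $|t^{z}|<1$, so $1+t^{z}$ lies in the open disk of radius $1$ about $1$, giving $\mathfrak{Re}(1+t^{z})>0$ rather than $>1$; this still keeps you off the branch cut, so the argument is unaffected.
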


Now we study the various properties of $\mathcal{F}(z;u,v,w)$. Recall that the function $\mathcal{F}(z;u,v,w)$ is defined for  $\mathfrak{Re}(z) > 0$.  Here,  we extend its domain by analytic continuation.
\begin{theorem}[Analytic Continuation of $\mathcal{F}(z;u,v,w)$]\label{Analytic Continuation_1}
For non-zero complex numbers $u, v$, and $w$ such that $|u|<1$, $|v|<1$, and $|w|<1$,
\begin{align}
\mathcal{F}(z;u,v,w)= \mathop{\sum^{\infty}\sum^{\infty}\sum^{\infty}}_{m=1\ n=1 \ k=1}\frac{u^m w^n v^k}{mn(mz+nz+k)}.
\end{align}
This holds for any complex $z$ such that $p z + q \neq 0$ for any $ p,  q \in \mathbb{N}$.   
\end{theorem}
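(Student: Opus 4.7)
The natural strategy is to expand each of the three factors in the integrand as a convergent series in $t$, interchange sum and integral, and evaluate the resulting moment integrals.

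For $\mathfrak{Re}(z)>0$, $t\in(0,1)$, and $|u|,|w|,|v|<1$, the standard Mercator and geometric expansions give
\begin{align*}
\log(1-ut^z)\log(1-wt^z) = \sum_{m,n=1}^{\infty}\frac{u^m w^n}{mn}\,t^{(m+n)z},\qquad \frac{1}{v^{-1}-t} = \frac{v}{1-vt}=\sum_{k=1}^{\infty}v^{k}t^{k-1}.
\end{align*}
Multiplying these three series, using $\int_{0}^{1}t^{(m+n)z+k-1}\,dt=\tfrac{1}{(m+n)z+k}$ (valid since $\mathfrak{Re}((m+n)z+k)>0$), and interchanging sum with integral, one formally recovers the claimed triple series.

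To justify the interchange, I would appeal to Tonelli's theorem applied to the triple sum--integral of absolute values: after integrating termwise in $t$, the resulting triple sum is
\begin{align*}
\sum_{m,n,k\geq 1}\frac{|u|^m|w|^n|v|^k}{mn\bigl((m+n)\mathfrak{Re}(z)+k\bigr)},
\end{align*}
which is finite because the denominator grows linearly while the numerator decays geometrically in all three indices. This establishes the formula on the right half-plane.

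For a general $z$ satisfying $pz+q\neq 0$ for every $p,q\in\mathbb{N}$, the same triple series still converges absolutely: when $\mathfrak{Im}(z)\neq 0$, the bound $|(m+n)z+k|\geq(m+n)|\mathfrak{Im}(z)|$ trivially combines with the geometric factors; for real $z$ at positive distance $\delta$ from every pole $-k/(m+n)$, one instead uses $|(m+n)z+k|\geq(m+n)\delta$. The main subtlety I expect to flag is that the set of excluded points $\{-q/p:p,q\in\mathbb{N}\}$ is dense in $(-\infty,0)$, so the identity extends $\mathcal{F}(z;u,v,w)$ pointwise off the right half-plane rather than to an open neighborhood of the entire negative real axis; the pointwise formulation of the theorem is exactly what this density obstruction forces.
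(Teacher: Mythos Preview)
Your approach is exactly that of the paper: expand the two logarithms and the geometric factor as power series in $t$, integrate termwise using $\int_0^1 t^{(m+n)z+k-1}\,dt=\frac{1}{(m+n)z+k}$, and then observe that the resulting triple series converges absolutely away from the poles. In fact your justification via Tonelli and your explicit convergence estimates (together with your remark on the density of the poles in $(-\infty,0)$) are more careful than the paper's own proof, which simply asserts that the series ``decays for the specified values of $u$, $v$, and $w$'' and is therefore uniformly convergent for any complex $z$ other than a negative rational.
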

 
Next, we find the following multiplication relations for $\mathcal{F}(z;u,v,w)$ similar to Theorem \ref{Dup_HZN_gen} for $\mathcal{F}(z;u,v)$.

\begin{theorem}[Multiplication formula]\label{Dup: analogue}
For $n \in \mathbb{N}$ and $\mathfrak{Re}(z) > 0$,  we have
\begin{align}
 \mathcal{F}(nz;u^n,v,w^n) &= \sum_{\alpha^n=1}\sum_{\beta^n=1}\mathcal{F}(z;u\alpha,v,w\beta), \quad u^n, w^n \in \mathbb{D}, v \in \mathbb{D}', \label{special Value_4} \\
\mathcal{F}\left(\frac{z}{n};u,v^n,w\right) &= \sum_{\alpha^n=1}\mathcal{F}(z;u,v\alpha,w), \quad u, w \in \mathbb{D}, v^n \in \mathbb{D'}. \label{special Value_5}
\end{align}
\end{theorem}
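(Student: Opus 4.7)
The plan is to imitate, with one extra logarithm factor, the proof of Theorem \ref{Dup_HZN_gen}. Both identities ultimately rest on the cyclotomic factorization
\[
1-X^n=\prod_{\alpha^n=1}(1-\alpha X),
\]
which, after taking logarithms on the appropriate branches (valid for $X$ in the unit disk), yields
\[
\log(1-X^n)=\sum_{\alpha^n=1}\log(1-\alpha X).
\]
The hypotheses $u^n,w^n\in\mathbb{D}$ (respectively $v^n\in\mathbb{D}'$) are exactly what is needed to ensure every branch is principal and the manipulations are legal.

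For the first identity \eqref{special Value_4}, I would start from the defining integral
\[
\mathcal{F}(nz;u^n,v,w^n)=\int_0^1\frac{\log(1-u^n t^{nz})\log(1-w^n t^{nz})}{v^{-1}-t}\,dt,
\]
and write $u^n t^{nz}=(ut^z)^n$, $w^n t^{nz}=(wt^z)^n$. Applying the log-factorization identity to each factor in the numerator and expanding the resulting product of sums gives
\[
\log(1-u^n t^{nz})\log(1-w^n t^{nz})=\sum_{\alpha^n=1}\sum_{\beta^n=1}\log(1-u\alpha t^z)\log(1-w\beta t^z).
\]
Interchanging the finite double sum with the integral yields exactly $\sum_{\alpha^n=1}\sum_{\beta^n=1}\mathcal{F}(z;u\alpha,v,w\beta)$, completing this part.

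For the second identity \eqref{special Value_5}, I would perform the substitution $t=s^n$ in the integral defining $\mathcal{F}(z/n;u,v^n,w)$, which converts $t^{z/n}$ into $s^z$ and produces a new weight $\frac{n s^{n-1}}{v^{-n}-s^n}$. The key computational step is the partial-fraction decomposition
\[
\frac{n s^{n-1}}{v^{-n}-s^n}=\sum_{\alpha^n=1}\frac{\alpha}{v^{-1}-\alpha s},
\]
obtained from the factorization $v^{-n}-s^n=\prod_{\alpha^n=1}(v^{-1}-\alpha s)$ together with the standard evaluation $\prod_{\gamma^n=1,\,\gamma\ne 1}(1-\gamma)=n$. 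Since $\frac{\alpha}{v^{-1}-\alpha s}=\frac{1}{(v\alpha)^{-1}-s}$, the kernel splits into the kernels of $\mathcal{F}(z;u,v\alpha,w)$ as $\alpha$ ranges over $n$-th roots of unity; summing over $\alpha$ finishes the argument.

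The proofs themselves are straightforward; the main obstacle to be careful about is branch-of-logarithm bookkeeping when applying $\log(1-X^n)=\sum_\alpha \log(1-\alpha X)$, and verifying that the substitution $t=s^n$ is valid on $[0,1]$ (which it is, since $s\mapsto s^n$ is a smooth bijection of $[0,1]$ onto itself). The hypotheses on $(u,v,w)$ are precisely chosen so that all integrands remain integrable and all logarithmic branches agree, so no additional analytic continuation argument is needed here; one can then, if desired, extend the resulting identities to the larger domain afforded by Theorem \ref{Analytic Continuation_1}.
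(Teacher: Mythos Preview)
Your proof is correct. For \eqref{special Value_4} your argument is identical to the paper's. For \eqref{special Value_5} you take a slightly different but cleaner route: the paper passes to $[0,\infty)$ via $t\mapsto e^{-t}$, rescales by $n$, and then invokes the identity $\frac{n}{1-Y^n}=\sum_{\beta^n=1}\frac{1}{1-\beta Y}$, whereas you stay on $[0,1]$ with the single substitution $t=s^n$ and apply the equivalent partial-fraction decomposition $\frac{ns^{n-1}}{v^{-n}-s^n}=\sum_{\alpha^n=1}\frac{1}{(v\alpha)^{-1}-s}$. The underlying algebraic content is the same cyclotomic identity, so the two arguments are essentially interchangeable; yours avoids the detour through the exponential change of variables.
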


In the next section, we explore some particular values of $\mathcal{F}(z;u,v,w)$ for $ z \in \mathbb{Q}$.

\subsection{Special evaluations of $\mathcal{F}(z;u,v,w)$}
For $z=1$ and $u=w$ in \eqref{special Value_5}, we have the following result.
\begin{corollary}\label{anal at 1/n}
Let $n$ be any natural number. Let $ u, v^n \in \mathbb{D'} $  such that $u \neq v,$ we have
\begin{align}
\mathcal{F}\left(\frac{1}{n};u,v^{n},u\right)&=\sum_{\alpha^n=1}\bigg( -\log^2(1-u) \log\left(\frac{u(1-\alpha v)}{u-\alpha v}\right)-2\log(1-u) \textup{Li}_2 \left(\frac{\alpha v(u-1)}{u-\alpha v}\right)\nonumber\\
&+2\textup{Li}_3\left(\frac{\alpha v(u-1)}{u-\alpha v}\right)-2\textup{Li}_3 \left(\frac{\alpha v}{\alpha v-u}\right)\bigg).
\end{align}
\end{corollary}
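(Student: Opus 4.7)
The plan is to specialize the multiplication formula \eqref{special Value_5} and then carry out a direct closed-form evaluation of the resulting one-variable integral. Setting $z=1$ and $w=u$ in \eqref{special Value_5}, and using the identity $\mathcal{F}(z;u,v,u)=\mathcal{F}_2(z;u,v)$ that follows immediately from the definitions \eqref{AHZN-1}--\eqref{AHZN}, one obtains
\[
\mathcal{F}\!\left(\tfrac{1}{n};u,v^{n},u\right)=\sum_{\alpha^n=1}\mathcal{F}_2(1;u,v\alpha),
\]
so the whole corollary reduces to a single closed-form evaluation of $\mathcal{F}_2(1;u,c)$, to be applied subsequently at $c=v\alpha$.

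To evaluate $\mathcal{F}_2(1;u,c)=\int_0^1 \log^2(1-ut)/(c^{-1}-t)\,dt$, I would perform the successive substitutions $y=ut$ and $w=1-y$, yielding the canonical form
\[
\mathcal{F}_2(1;u,c) \;=\; \int_{1-u}^{1}\frac{\log^2 w}{w-a}\,dw,\qquad a:=\frac{c-u}{c}.
\]
A further change of variables $x=w/a$ reduces the task (up to lower-order terms involving $\log a$) to $\int \log^2 x/(x-1)\,dx$, which I would compute by integration by parts combined with the standard polylogarithm antiderivatives
\[
\int \frac{\log(1-x)}{x}\,dx=-\textup{Li}_2(x),\qquad \int \frac{\log x\,\log(1-x)}{x}\,dx=-\log x\,\textup{Li}_2(x)+\textup{Li}_3(x).
\]
Reassembling the pieces gives the clean antiderivative
\[
\int\frac{\log^2 w}{w-a}\,dw \;=\; \log^2 w\,\log(1-w/a)+2\log w\,\textup{Li}_2(w/a)-2\,\textup{Li}_3(w/a)+C,
\]
which can be verified instantly by direct differentiation.

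Evaluating this antiderivative from $w=1-u$ to $w=1$, the upper endpoint contributes only $-2\,\textup{Li}_3(1/a)$ since every $\log w$ prefactor vanishes at $w=1$. At the lower endpoint one uses the elementary identities $1-(1-u)/a=u(1-c)/(u-c)$, $(1-u)/a=c(1-u)/(c-u)$, and $1/a=c/(c-u)$, so that $\mathcal{F}_2(1;u,c)$ collapses to the $\log$--$\textup{Li}_2$--$\textup{Li}_3$ combination which, on substituting $c=v\alpha$ and recognising $c(1-u)/(c-u)=\alpha v(u-1)/(u-\alpha v)$ and $c/(c-u)=\alpha v/(\alpha v-u)$, matches exactly the $\alpha$-th summand on the right of the corollary; summing over $\alpha^n=1$ finishes the proof. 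The main obstacle is the algebraic bookkeeping in this last step: packaging the antiderivative with the single factor $\log(1-w/a)$ (rather than $\log(a-w)$ plus a stray $\log a$) is what allows the lower-boundary value to collapse to a single logarithm, and one must choose logarithm branches consistently on the domain $(u,v^n)\in\mathbb{D}'\times\mathbb{D}'$ with $u\neq v$ so that the identities used (in particular $a/(a+u-1)=(u-c)/(u(1-c))=[u(1-c)/(u-c)]^{-1}$) hold without spurious $2\pi i$ shifts.
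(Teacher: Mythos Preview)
Your proposal is correct and follows essentially the same route as the paper: reduce via the multiplication formula \eqref{special Value_5} to a sum of copies of $\mathcal{F}_2(1;u,c)$, and evaluate the latter in closed form using polylogarithm antiderivatives. The only cosmetic difference is that the paper packages the evaluation of $\mathcal{F}_2(1;u,c)$ as the $k=2$ case of Lemma~\ref{f11} (proved by repeated integration by parts directly in the variable $t$, using $\operatorname{Li}_j\!\bigl(\tfrac{v(1-ut)}{v-u}\bigr)$ as successive antiderivatives), whereas you first substitute $w=1-ut$ to reach the canonical form $\int \log^2 w/(w-a)\,dw$ before writing down the antiderivative; since $w/a=c(1-ut)/(c-u)$, these are literally the same antiderivative in different clothing.
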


\begin{remark} Letting $u$ tends to $1^-$ in Corollary \ref{anal at 1/n}, one can check that the first three terms will go to zero. Therefore, we get
\begin{align}\label{special value at u=1 and 1/n}
\mathcal{F}\left(\frac{1}{n};1,v^n,1\right) = -\sum_{\alpha^n=1}2 \textup{Li}_3\left(\frac{\alpha v}{\alpha v-1}\right).
\end{align}
One can evaluate the value of $\mathcal{F} \left(\frac{1}{n};1,-1,1\right)$ for any odd natural number $n$ directly using \eqref{special value at u=1 and 1/n}.
In particular, for $n=1$, we get
\begin{align*}
\mathcal{F}\left(1;1,-1,1\right) = -2\,\operatorname{Li}_{3}\left(\frac{1}{2}\right).
 \end{align*}
 It is interesting to note that $\operatorname{Li}_{3}\left(\frac{1}{2}\right) = \frac{7}{8}\zeta(3) - \frac{\pi^2}{12}\log (2) + \frac{\log^3(2)}{6}$, see \cite[Eq.~(1.14)]{Lewin}.
\end{remark}
The next result gives an explicit evaluation of $\mathcal{F}(1;u,v,w)$ when $u = v = w$.

\begin{theorem}\label{for u=v=w}
 For any $u \in \mathbb{D}'$, we have
\begin{align}
\mathcal{F}\left(1;u,u,u\right)&= - \frac{1}{3} \log^3 \left(1-u \right).
\end{align}
\end{theorem}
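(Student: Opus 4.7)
The plan is to evaluate this integral directly by a single change of variables that collapses it to an elementary antiderivative; no auxiliary machinery such as the multiplication formulas of Theorem \ref{Dup: analogue} or polylogarithm identities is needed in this highly symmetric case.

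Starting from the definition \eqref{AHZN-1} specialized to $z=1$ and $v=w=u$, I would write
$$\mathcal{F}(1;u,u,u) = \int_0^1 \frac{\log^2(1-ut)}{u^{-1}-t}\, dt,$$
and apply the substitution $s = 1-ut$. The one computation to perform is that $dt = -ds/u$ while the denominator simplifies as $u^{-1}-t = u^{-1} - (1-s)/u = s/u$, so the two factors of $u$ cancel and the integrand reduces to $\log^2(s)/s$. Tracking the endpoints ($t=0 \mapsto s=1$, $t=1 \mapsto s=1-u$) then yields
$$\mathcal{F}(1;u,u,u) = \int_{1-u}^{1} \frac{\log^2(s)}{s}\, ds,$$
and the antiderivative $\tfrac{1}{3}\log^3(s)$ evaluates this to $-\tfrac{1}{3}\log^3(1-u)$ since $\log(1)=0$.

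There is no substantial obstacle; the only point that deserves a brief justification is the complex-analytic legitimacy of using the principal branch of $\log$ along the segment from $1-u$ to $1$ in the $s$-plane. Since $u \in \mathbb{D}'$ means $0 < |u| \le 1$ with $u \ne 1$, this segment lies inside the closed disk $\{s : |s-1| \le 1\}$, which sits entirely in the closed right half plane and avoids the origin; thus $\log s$ is holomorphic on it and the antiderivative step is valid. The same disk argument shows that the original integrand is well defined on $t \in [0,1]$ and that the denominator $u^{-1}-t$ has no zero there (because $|u^{-1}| \ge 1$, with equality only at the excluded value $u=1$). Conceptually the whole identity is just the remark that $\log^2(1-ut)/(u^{-1}-t)\, dt = \log^2(1-ut)\, d\log(1-ut)$, making the proof essentially a one-line substitution.
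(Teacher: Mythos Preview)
Your proof is correct and essentially the same as the paper's: the paper integrates by parts to obtain the self-referential relation $\mathcal{F}(1;u,u,u) = -\log^3(1-u) - 2\,\mathcal{F}(1;u,u,u)$ and solves, whereas you equivalently recognize the integrand as $\log^2(1-ut)\,d\log(1-ut)$ and integrate directly via the substitution $s=1-ut$. (Your direct-antiderivative observation is exactly the method the paper uses for the general-$k$ analogue, Theorem~\ref{u=v and x=1}; one minor imprecision is that $|u^{-1}|=1$ occurs for all $|u|=1$, not just $u=1$, but since $u^{-1}\in[0,1]$ forces $u=1$ your conclusion that the denominator is nonvanishing on $[0,1]$ is still valid.)
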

Now for $ u \neq v \neq w$, we give an evaluation of $\mathcal{F}(1;u,v,w)$ which will be used further for calculating $\mathcal{F}(z;u,v,w)$ at rational arguments of $z$.
\begin{theorem}\label{Final lemma}
Let $u, v, w, \in \mathbb{D}'$ such that $ u \neq v \neq w$. We have
\begin{align}
\mathcal{F}(1;u,v,w) &= \log\left(1-u\right)\log\left(\frac{u(1-v)}{u-v}\right)\log\left(\frac{v}{v-w}\right) - \log\left(1-w\right) \operatorname{Li}_{2}\left(\frac{v(1-u)}{v-u}\right) \nonumber \\
&  - \log(1-u)\log\left(\frac{w(1-v)}{w-v}\right)\log\left(\frac{v(1-w)}{v-w}\right) - \log(1-u)\operatorname{Li}_{2}\left(\frac{v(1-w)}{v-w}\right) \nonumber \\
& - \log\left(\frac{v-u}{v-w}\right)\bigg[\operatorname{Li}_{2}\left(\frac{v(w-u)}{w(v-u)}\right) - \log \left(\frac{w}{w-u}\right)\log \left(\frac{u(w-v)}{w(u-v)}\right) \bigg] \nonumber \\
& + \log\left(\frac{(v-u)(w-1)}{(v-w)(u-1)}\right)\bigg[\operatorname{Li}_{2}\bigg(\frac{(v-u)(w-1)}{(v-w)(u-1)}\bigg) + \operatorname{Li}_{2}\left(\frac{v(1-u)}{v-u}\right) \nonumber \\
& -\operatorname{Li}_2\left(\frac{v(1-w)}{v-w}\right) -\operatorname{Li}_{2}\left(\frac{u(1-w)}{w(1-u)}\right)\bigg] \nonumber \\ 
& + \frac{1}{2}\log\left(\frac{w(u-v)}{u(w-v)}\right)\bigg[\log^{2} \left( \frac{u-v}{v(u-1)}\right) - \log^2\left(\frac{v}{v-u}\right)\bigg] \nonumber \\
& + \operatorname{Li}_3\left(\frac{v(1-w)}{v-w}\right)-\operatorname{Li}_3\left(\frac{v}{v-w}\right) +\operatorname{Li}_3\left(\frac{v(1-u)}{v-u}\right)-\operatorname{Li}_3\left(\frac{v}{v-u}\right) \nonumber \\
& + \operatorname{Li}_{3}\left(\frac{u(1-w)}{w(1-u)}\right) - \operatorname{Li}_{3}\left(\frac{(v-u)(w-1)}{(v-w)(u-1)}\right) - \operatorname{Li}_{3}\left(\frac{u}{w}\right) + \operatorname{Li}_{3}\left(\frac{v-u}{v-w}\right). \nonumber
\end{align}

\end{theorem}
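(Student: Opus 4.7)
The approach is to perform one integration by parts that removes one of the two logarithms in the numerator, and then to evaluate the remaining integral, whose antiderivative lies in the weight-three polylogarithmic algebra spanned by $\operatorname{Li}_3$, $\log\cdot\operatorname{Li}_2$, $\log\cdot\log\cdot\log$, and constants.

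First I introduce the running antiderivative
\[
G(t) \;=\; \int_0^t \frac{\log(1-w\tau)}{v^{-1}-\tau}\,d\tau,
\]
which by the same argument as in \eqref{CK at p/q} (specialized to $m=n=1$ with variable upper limit) admits a closed form in terms of $\operatorname{Li}_2$ at linear-fractional arguments in $t$, plus products of logarithms. Integration by parts with $\log(1-ut)$ as the differentiated factor then yields
\[
\mathcal{F}(1;u,v,w) \;=\; \log(1-u)\,G(1) \;+\; u\int_0^1 \frac{G(t)}{1-ut}\,dt,
\]
and the boundary term $\log(1-u)\,G(1)=\log(1-u)\,\mathcal{F}(1;w,v)$ accounts, via the closed form \eqref{CK at p/q} with $m=n=1$, for several of the $\log(1-u)\operatorname{Li}_2(\cdot)$ pieces in the statement (e.g.\ the term $-\log(1-u)\operatorname{Li}_2(v(1-w)/(v-w))$).

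Second, I substitute the explicit formula for $G(t)$ into the remaining integral, which decomposes into summands of two basic shapes:
\[
\int_0^1 \frac{\log(1-at)\log(1-bt)}{1-ut}\,dt \qquad\text{and}\qquad \int_0^1 \frac{\operatorname{Li}_2(\phi(t))}{1-ut}\,dt,
\]
with $\phi$ a linear-fractional map in $t$. Each admits an antiderivative in weight three: the first by a further integration by parts reducing it to the previous dilogarithmic case, and the second via repeated integration by parts together with the identity $\tfrac{d}{dz}\operatorname{Li}_3(z)=\operatorname{Li}_2(z)/z$. This process produces all the $\operatorname{Li}_3$ arguments appearing in the theorem, at inputs such as $v(1-u)/(v-u)$, $u(1-w)/(w(1-u))$ and $(v-u)(w-1)/((v-w)(u-1))$.

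The main obstacle will be the final consolidation. The raw output is a long sum of polylogarithms at many arguments, and matching it to the precise grouped form in the statement requires repeated application of the dilogarithm five-term (Abel) identity and of the trilogarithm inversion and Landen-type relations. As a cross-check one has the symmetry relation
\[
\mathcal{F}(1;u,v,w) + \mathcal{F}(1;v,u,w) + \mathcal{F}(1;v,w,u) \;=\; -\log(1-u)\log(1-v)\log(1-w),
\]
obtained immediately from the alternative integration by parts using $dt/(v^{-1}-t)=-d\log(1-vt)$, which is useful for verifying cancellations among cross-terms and for catching sign errors. While each individual manipulation is routine, the bookkeeping needed to land precisely on the displayed grouping is where the real work lies.
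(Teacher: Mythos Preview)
Your plan is correct and is essentially the same strategy the paper uses: repeated integration by parts driven by the relations $\tfrac{d}{dz}\operatorname{Li}_{s+1}(z)=\operatorname{Li}_s(z)/z$ and $\operatorname{Li}_1(z)=-\log(1-z)$, followed by consolidation via the Abel/Rogers five-term identity. The only structural difference is the first move. The paper integrates the bare factor $v/(1-vt)$ to a logarithm while keeping $\log(1-ut)\log(1-wt)$ as the differentiated piece; you instead integrate the full block $\log(1-wt)/(v^{-1}-t)$ to your $G(t)$ and differentiate $\log(1-ut)$. After one more step both routes land on the same core integral $\int_0^1 \tfrac{1}{ut-1}\operatorname{Li}_2\bigl(\tfrac{v(1-wt)}{v-w}\bigr)\,dt$, which the paper isolates as $J(u,v,w)$ and handles (together with the companion $\log^2$-integral $I(u,v,w)$) in two preparatory lemmas; your ``second shape'' integral is exactly this $J$, and your description of how to treat it matches the paper's Lemma~\ref{lemma 2}.

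What your route buys is a slightly cleaner first boundary term ($\log(1-u)\,\mathcal{F}(1;w,v)$, already in closed form), and the three-term relation you record as a cross-check is a genuinely useful sanity test that the paper does not mention. What the paper's route buys is that by deferring the appearance of $\operatorname{Li}_2$ it can package the hardest pieces into the self-contained identities for $I(u,v,w)$ and $J(u,v,w)+J(w,v,u)$, which makes the final assembly more modular. In either case the heavy lifting is, as you say, the bookkeeping: the paper spends several pages on exactly the consolidation you flag as ``where the real work lies,'' so your sketch is accurate about both the method and its cost.
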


\begin{theorem}\label{val at rational for anlo}
Let $u, v, w \in \mathbb{D}'$ such that $u \neq v \neq w$ and $p,q \in \mathbb{N}$. Then $\mathcal{F}\left(\frac{p}{q};u,v,u\right)$ can be given as
\begin{align}
\mathcal{F}\left(\frac{p}{q};u,v,w\right)&=\sum_{\alpha^q=1} \sum_{\beta^p=1} \sum_{\gamma^p=1} \mathcal{F}(1;u^\frac{1}{p}\beta,v^\frac{1}{q}\alpha,w^\frac{1}{p}\gamma).
\end{align}
where the value $\mathcal{F}(1;u, v, w)$ is provided in Theorem \ref{Final lemma}.

\end{theorem}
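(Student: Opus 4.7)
The plan is to reduce the evaluation of $\mathcal{F}(p/q;u,v,w)$ to the value at $z=1$ by two successive applications of the multiplication formulas in Theorem \ref{Dup: analogue}, and then invoke Theorem \ref{Final lemma} to obtain a closed form for each summand.

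First I would apply the formula \eqref{special Value_5} with $n=q$ to eliminate the denominator in $p/q$. Writing $z/q = p/q$ forces $z=p$, and to get $v$ in the output I replace $v$ by $v^{1/q}$ before applying the identity. This yields
\begin{align*}
\mathcal{F}\!\left(\frac{p}{q};u,v,w\right) = \sum_{\alpha^q=1} \mathcal{F}\!\left(p; u, v^{1/q}\alpha, w\right).
\end{align*}
Next, to turn the argument $p$ into $1$, I apply \eqref{special Value_4} with $n=p$: writing $nz=p$ forces $z=1$, and replacing $u$ by $u^{1/p}$ and $w$ by $w^{1/p}$ in the multiplication formula gives, for each fixed $\alpha$ with $\alpha^q=1$,
\begin{align*}
\mathcal{F}\!\left(p; u, v^{1/q}\alpha, w\right) = \sum_{\beta^p=1}\sum_{\gamma^p=1} \mathcal{F}\!\left(1; u^{1/p}\beta, v^{1/q}\alpha, w^{1/p}\gamma\right).
\end{align*}
Substituting this back produces the claimed triple sum. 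A final remark notes that Theorem \ref{Final lemma} then provides the explicit evaluation of each term $\mathcal{F}(1;u^{1/p}\beta, v^{1/q}\alpha, w^{1/p}\gamma)$ in terms of logarithms, dilogarithms, and trilogarithms.

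The main obstacle is checking admissibility: each step requires the $p$-th or $q$-th power of the new base to land in the correct domain ($\mathbb{D}$ for the first and third slots, $\mathbb{D}'$ for the middle slot), and the hypotheses of Theorem \ref{Final lemma} require $u^{1/p}\beta$, $v^{1/q}\alpha$, $w^{1/p}\gamma$ to be pairwise distinct. The domain conditions follow immediately from the hypotheses $u,v,w\in\mathbb{D}'$ since $|u^{1/p}\beta|=|u|^{1/p}$ and similarly for the others; the distinctness condition holds generically and the finitely many coincidence loci can be handled by a limiting argument. Once these technicalities are settled, the proof is essentially a bookkeeping exercise that chains together the two multiplication identities.
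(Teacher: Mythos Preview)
Your proposal is correct and follows exactly the same route as the paper: first apply \eqref{special Value_5} with $n=q$ to pass from $p/q$ to $p$, then apply \eqref{special Value_4} with $n=p$ to reduce to $z=1$, and finally invoke Theorem~\ref{Final lemma} for each summand. Your additional remarks on domain admissibility and the distinctness condition go slightly beyond what the paper records, but the core argument is identical.
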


We can evaluate $\mathcal{F}\left(\frac{p}{q};u,v,w\right)$ on two different arguments using the above theorem, one by putting $p=n, q=1$ and another with $p=1, q=n$.

\begin{corollary}\label{anal at natural}
For any natural number $n$ and $u, v, w \in \mathbb{D'}$ such that $u \neq v \neq w$, we have
\begin{align}
\mathcal{F}(n;u,v,w) &= \sum_{\beta^n=1}\sum_{\gamma^n=1}\mathcal{F}(1;u^\frac{1}{n}\beta,v,w^\frac{1}{n}\gamma), \\
\mathcal{F}\left(\frac{1}{n};u,v,w\right) &= \sum_{\alpha^n=1}\mathcal{F}(1;u,v^\frac{1}{n}\alpha,w).
\end{align} 
\end{corollary}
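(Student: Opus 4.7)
The plan is to obtain both identities as direct specializations of Theorem~\ref{val at rational for anlo}, which expresses $\mathcal{F}(p/q;u,v,w)$ as a triple sum over $q$-th roots of unity (indexed by $\alpha$) and $p$-th roots of unity (indexed by $\beta$ and $\gamma$). The two cases in the corollary correspond to the two extreme choices $q=1$ and $p=1$.

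For the first identity, I would set $p=n$ and $q=1$ in Theorem~\ref{val at rational for anlo}. The outer sum $\sum_{\alpha^q=1}$ then collapses to the single term $\alpha=1$ (the only first root of unity), and $v^{1/q}$ reduces to $v$, so the factor $v^{1/q}\alpha$ inside $\mathcal{F}(1;\cdot,\cdot,\cdot)$ becomes simply $v$. The remaining double sum over $\beta^n=1$ and $\gamma^n=1$, together with $u^{1/p}=u^{1/n}$ and $w^{1/p}=w^{1/n}$, yields exactly the stated expression for $\mathcal{F}(n;u,v,w)$. For the second identity, I would symmetrically set $p=1$ and $q=n$: now the two inner sums over $\beta^p=1$ and $\gamma^p=1$ each collapse to a single term, the exponents $u^{1/p}$ and $w^{1/p}$ reduce to $u$ and $w$, and only $\sum_{\alpha^n=1}\mathcal{F}(1;u,v^{1/n}\alpha,w)$ remains.

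Since the corollary is a straightforward substitution, there is no real technical obstacle beyond being careful about the side conditions. In particular, one should confirm that the hypotheses $u,v,w\in\mathbb{D}'$ and $u\neq v\neq w$ of Theorem~\ref{val at rational for anlo} are inherited by the specializations: for the first identity one needs $u^{1/n}\beta,\,v,\,w^{1/n}\gamma\in\mathbb{D}'$ with pairwise distinctness for each pair $(\beta,\gamma)$ of $n$-th roots of unity, and similarly for the second identity with $v^{1/n}\alpha$. These restrictions are implicit in the statement through the assumption $u\neq v\neq w$ and the membership in $\mathbb{D}'$, and no extra work is required beyond recording them. This completes the outline of the proof.
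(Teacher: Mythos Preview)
Your proposal is correct and matches the paper's own proof exactly: the paper simply substitutes $(p,q)=(n,1)$ and $(p,q)=(1,n)$ into Theorem~\ref{val at rational for anlo}. Your additional remarks on the side conditions are more careful than what the paper itself records, but the core argument is identical.
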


\subsection{The function $\mathcal{F}_k(z;u,v)$}
We now shift our focus to the function $\mathcal{F}_k(z;u,v)$ defined in \eqref{AHZN} and examine its characteristics.
First, we give an analytic continuation of $\mathcal{F}_k(z;u,v)$ to $\mathbb{C}$ except at negative rationals.
\begin{theorem}\label{theorem:analytic_continuation}
For $|u|<1$ and $|v|<1$, we have
\begin{align}
\mathcal{F}_k(z;u,v) &= \prod_{i=1}^{k}\sum_{\substack{m_i, l=1}}^{\infty}\frac{u^{m_i} v^l}{m_i(z(m_1+m_2+ \cdots +m_k) +l)}, \quad z \neq -\frac{p}{q}\quad \text{where } p,q \in \mathbb{N}.\nonumber
\end{align}
\end{theorem}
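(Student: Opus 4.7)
The plan is to mimic the strategy behind Theorem~\ref{Analytic Continuation_1} (which effectively handles the $k=2$ case with $u=w$), generalized to arbitrary $k\in\mathbb{N}$.

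First, for $\mathfrak{Re}(z)>0$ together with $|u|,|v|<1$, I would expand each factor of the integrand in~\eqref{AHZN} as a power series in $t$. The logarithmic expansion
\begin{align*}
\log(1-ut^z) = -\sum_{m=1}^{\infty}\frac{u^{m} t^{mz}}{m}
\end{align*}
converges absolutely and uniformly on $[0,1]$, and raising to the $k$-th power via the Cauchy product yields the $k$-fold series
\begin{align*}
\log^{k}(1-ut^z) = (-1)^{k}\sum_{m_1,\dots,m_k=1}^{\infty}\frac{u^{m_1+\cdots+m_k}}{m_1 m_2\cdots m_k}\, t^{z(m_1+\cdots+m_k)}.
\end{align*}
Since $|v|<1$, the geometric expansion $(v^{-1}-t)^{-1}=\sum_{l=1}^{\infty} v^{l}\,t^{\,l-1}$ is likewise absolutely convergent on $[0,1]$.

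Substituting these expansions into~\eqref{AHZN} and interchanging summation with integration (justified by Fubini/Tonelli after checking absolute summability of $\int_0^1 |u|^{M}|v|^{l}(m_1\cdots m_k)^{-1}\,t^{\mathfrak{Re}(z)M+l-1}\,dt$, where $M=m_1+\cdots+m_k$), I would evaluate
\begin{align*}
\int_{0}^{1} t^{zM+l-1}\,dt \;=\; \frac{1}{zM+l}, \qquad \mathfrak{Re}(z)>0.
\end{align*}
This produces the claimed multi-series identity on the right half plane $\mathfrak{Re}(z)>0$.

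The final step is analytic continuation. Both sides are holomorphic in $z$ wherever they are defined, so by the identity theorem it suffices to verify that the series on the right converges absolutely and locally uniformly on $\{z\in\mathbb{C}: z\neq -p/q,\ p,q\in\mathbb{N}\}$. I expect this to be the main obstacle: one has to control the small denominators $|zM+l|$ for those $(M,l)$ where $l/M$ is close to $-z$. When $\mathfrak{Im}(z)\neq 0$ the estimate $|zM+l|\geq |\mathfrak{Im}(z)|\, M$ together with the damping $|u|^{M}|v|^{l}$ and the $1/(m_1\cdots m_k)$ weights gives convergence via a routine majorant argument, while on the real axis one has to absorb the near-resonances $|l+zM|$ against the exponential decay of $|v|^{l}$. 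Once this uniform convergence is in hand, analytic continuation from the right half plane completes the proof.
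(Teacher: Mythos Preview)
Your approach matches the paper's exactly: expand $\log^{k}(1-ut^{z})$ and $(v^{-1}-t)^{-1}$ as power series, interchange summation with integration, and evaluate $\int_0^1 t^{zM+l-1}\,dt$ term by term; your justification of the analytic continuation via the identity theorem and small-denominator estimates is in fact more careful than the paper's, which simply asserts uniform convergence away from negative rationals. One remark: your expansion correctly carries a factor $(-1)^{k}$, which neither the stated identity nor the paper's own proof includes, so for odd $k$ the formula as written appears to be off by a sign --- you should flag this rather than claim that your computation ``produces the claimed multi-series identity'' as stated.
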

The next result gives one of the variants of multiplication formula.

\begin{theorem}\label{theorem:duplication_formula}
For $u \in \mathbb{D}$, $v^n\in \mathbb{D'}$, and $\mathfrak{Re}(z)>0$, then for any given natural number $n$, we have
\begin{align}
\mathcal{F}_k\left(\frac{z}{n};u,v^n\right)&= \sum_{\beta^n=1}\mathcal{F}_k(z;u,\beta v). \label{special Value_3}
\end{align}
\end{theorem}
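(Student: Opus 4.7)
My plan is to reduce everything to a single change of variables in the integral defining $\mathcal{F}_k(z/n;u,v^n)$, followed by a partial fraction identity for the Poisson-type kernel $1/(v^{-n}-t)$.

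Starting from the definition,
\begin{align*}
\mathcal{F}_k\!\left(\frac{z}{n};u,v^n\right) = \int_{0}^{1}\frac{\log^{k}(1-ut^{z/n})}{v^{-n}-t}\,dt,
\end{align*}
I would substitute $t = s^n$, which is a well-defined bijection of $[0,1]$ to itself and gives $dt = n s^{n-1}\,ds$. The numerator becomes $\log^{k}(1-us^{z})$, so
\begin{align*}
\mathcal{F}_k\!\left(\frac{z}{n};u,v^n\right) = \int_{0}^{1}\log^{k}(1-us^{z})\,\frac{n s^{n-1}}{v^{-n}-s^{n}}\,ds.
\end{align*}

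The key algebraic step is to expand the rational factor by partial fractions. Using the factorization $v^{-n}-s^{n} = \prod_{\beta^n=1}(v^{-1}-\beta s)$ and differentiating the logarithm,
\begin{align*}
\frac{n s^{n-1}}{v^{-n}-s^{n}} = -\frac{d}{ds}\log(v^{-n}-s^{n}) = \sum_{\beta^{n}=1}\frac{\beta}{v^{-1}-\beta s} = \sum_{\beta^{n}=1}\frac{1}{(\beta v)^{-1}-s},
\end{align*}
where in the last equality I use that $\beta \mapsto \beta^{-1}$ permutes the $n$-th roots of unity. Substituting back and interchanging the finite sum with the integral yields
\begin{align*}
\mathcal{F}_k\!\left(\frac{z}{n};u,v^n\right) = \sum_{\beta^{n}=1}\int_{0}^{1}\frac{\log^{k}(1-us^{z})}{(\beta v)^{-1}-s}\,ds = \sum_{\beta^{n}=1}\mathcal{F}_k(z;u,\beta v),
\end{align*}
which is the claimed identity.

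The only delicate point is verifying that each integral on the right is well-defined, that is, that $\beta v \in \mathbb{D}'$ for every $n$-th root of unity $\beta$. This follows from the hypothesis $v^{n}\in\mathbb{D}'$: indeed $|\beta v| = |v| = |v^{n}|^{1/n} \le 1$, and if some $\beta v$ equalled $1$ then $v^{n} = (\beta v)^{n} = 1$, contradicting $v^{n}\in\mathbb{D}'$. The hardest step of the argument is really just getting the partial fraction decomposition with the correct signs and using the $\beta\leftrightarrow\beta^{-1}$ symmetry cleanly; the rest is routine.
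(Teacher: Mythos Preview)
Your proof is correct and is essentially the paper's approach streamlined: the paper performs three successive substitutions ($t\mapsto e^{-t}$, then $t\mapsto nt$, then $t\mapsto -\log t$) whose composite is precisely your single substitution $t=s^{n}$, and then applies the same partial-fraction identity $\dfrac{n}{1-Y^{n}}=\sum_{\beta^{n}=1}\dfrac{1}{1-\beta Y}$. Your remark about the $\beta\leftrightarrow\beta^{-1}$ relabeling is actually unnecessary, since $\dfrac{\beta}{v^{-1}-\beta s}=\dfrac{\beta v}{1-\beta v s}=\dfrac{1}{(\beta v)^{-1}-s}$ already holds term by term.
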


In the next theorem, we give the explicit evaluation of $\mathcal{F}_k\left(\frac{1}{n};u,v\right)$ for any $n \in \mathbb{N}$ .
\begin{theorem}\label{GEN at 1/n}
For any $u,v \in \mathbb{D}'$ such that $ u \neq v$,
\begin{align}
\mathcal{F}_k\left(\frac{1}{n};u,v\right)&=\sum_{\beta^n=1}\bigg\{\sum_{j=1}^{k+1} (-1)^{j-1} \log^{k+1-j}(1-u) \operatorname{Li}_j\left(\frac{\beta v^\frac{1}{n}(u-1)}{u-\beta v^\frac{1}{n}}\right)\frac{k!}{(k+1-j)!}\nonumber\\
& +(-1)^{k+1} k! \operatorname{Li}_{k+1}\left(\frac{\beta v^\frac{1}{n}}{\beta v^\frac{1}{n}-u}\right)\bigg\}.
\end{align}
\end{theorem}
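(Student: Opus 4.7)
The plan is to first reduce from $z=1/n$ to $z=1$. Theorem \ref{theorem:duplication_formula} with $z=1$ (and $v$ replaced by $v^{1/n}$) yields
\begin{equation*}
\mathcal{F}_k\left(\tfrac{1}{n};u,v\right)=\sum_{\beta^n=1}\mathcal{F}_k\left(1;u,\beta v^{1/n}\right),
\end{equation*}
so it suffices to prove the single-variable identity
\begin{align*}
\mathcal{F}_k(1;u,V) &= \sum_{j=1}^{k+1}(-1)^{j-1}\log^{k+1-j}(1-u)\operatorname{Li}_j\!\left(\frac{V(u-1)}{u-V}\right)\frac{k!}{(k+1-j)!}\\
&\quad + (-1)^{k+1}k!\operatorname{Li}_{k+1}\!\left(\frac{V}{V-u}\right)
\end{align*}
for $u,V\in\mathbb{D}'$ with $u\neq V$; the theorem follows term by term on taking $V=\beta v^{1/n}$ and summing over $\beta^n=1$.

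For the single-variable identity, I would change variable to $\phi=\log(1-ut)$, so that $t=(1-e^\phi)/u$, $dt=-e^\phi\,d\phi/u$, and $V^{-1}-t=(u-V+Ve^\phi)/(uV)$. Writing $L:=\log(1-u)$, the integral transforms to
\begin{equation*}
\mathcal{F}_k(1;u,V)=-V\int_0^{L}\frac{\phi^k e^\phi}{u-V+Ve^\phi}\,d\phi.
\end{equation*}
Next shift $\phi=L+\xi$ and use $e^L=1-u$. Setting $Y:=V(u-1)/(u-V)=V(1-u)/(V-u)$, a short algebraic verification gives the key identities
\begin{equation*}
\frac{V(1-u)e^\xi}{u-V+V(1-u)e^\xi}=\frac{Ye^\xi}{Ye^\xi-1} \qquad \text{and}\qquad \frac{Y}{1-u}=\frac{V}{V-u},
\end{equation*}
so that $\mathcal{F}_k(1;u,V)=-\int_{-L}^{0}(L+\xi)^k\bigl(1+1/(Ye^\xi-1)\bigr)d\xi$. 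The elementary polynomial piece contributes $-L^{k+1}/(k+1)$, and what remains is the main term $\int_{-L}^{0}(L+\xi)^k/(1-Ye^\xi)\,d\xi$.

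Expanding the geometric series $1/(1-Ye^\xi)=\sum_{n\ge 0}Y^n e^{n\xi}$, substituting $\eta=L+\xi$ in each term, and applying the standard antiderivative
\begin{equation*}
\int\eta^k e^{n\eta}\,d\eta=e^{n\eta}\sum_{i=0}^{k}\frac{(-1)^i k!\,\eta^{k-i}}{(k-i)!\,n^{i+1}}\qquad (n\ge 1),
\end{equation*}
the $n=0$ contribution cancels the polynomial piece exactly. The boundary evaluation at $\eta=L$ produces terms proportional to $Y^n/n^{i+1}$ (the factor $e^{nL}$ cancels against $e^{-nL}$), which sum over $n\ge 1$ to $\operatorname{Li}_{i+1}(Y)$; the boundary evaluation at $\eta=0$ produces a single term $-(-1)^k k!\,Y^n/[n^{k+1}(1-u)^n]$, which, since $Y/(1-u)=V/(V-u)$, sums to $(-1)^{k+1}k!\operatorname{Li}_{k+1}(V/(V-u))$. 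Re-indexing $j=i+1$ in the resulting double sum matches the right-hand side of the claimed identity.

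The main obstacle is justifying the geometric expansion: it is pointwise valid only where $|Ye^\xi|<1$ throughout $[-L,0]$, which may fail for some admissible $(u,V)$. I would therefore first establish the identity on the open region in which the series converges absolutely (making the termwise integration routine by dominated convergence) and then extend it to the full domain $u,V\in\mathbb{D}'$ with $u\neq V$ by analytic continuation, since both sides are holomorphic off an explicit branch locus -- the left via the integral representation, the right via the standard continuation of $\operatorname{Li}_j$.
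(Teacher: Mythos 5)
Your proposal is correct, and its first step coincides with the paper's: both reduce $\mathcal{F}_k\bigl(\tfrac{1}{n};u,v\bigr)$ to $\sum_{\beta^n=1}\mathcal{F}_k(1;u,\beta v^{1/n})$ via the multiplication formula of Theorem \ref{theorem:duplication_formula}. Where you diverge is in evaluating $\mathcal{F}_k(1;u,V)$. The paper (Lemma \ref{f11}) integrates by parts $k$ times directly in the variable $t$, using at each stage the closed-form derivative identity $\frac{\mathrm{d}}{\mathrm{d}t}\operatorname{Li}_{j+1}\bigl(\tfrac{V(1-ut)}{V-u}\bigr)=-\tfrac{u}{1-ut}\operatorname{Li}_j\bigl(\tfrac{V(1-ut)}{V-u}\bigr)$ to climb the polylogarithm ladder; the boundary terms at $t=1$ and $t=0$ produce, respectively, the $\operatorname{Li}_j\bigl(\tfrac{V(u-1)}{u-V}\bigr)$ sum and the final $\operatorname{Li}_{k+1}\bigl(\tfrac{V}{V-u}\bigr)$. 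You instead substitute $\phi=\log(1-ut)$, reduce to $-\int_{-L}^{0}(L+\xi)^k\bigl(1+\tfrac{1}{Ye^{\xi}-1}\bigr)\,d\xi$ with $Y=\tfrac{V(u-1)}{u-V}$, expand a geometric series, and integrate termwise with the elementary antiderivative of $\eta^k e^{n\eta}$; I checked the algebra (the cancellation of the $n=0$ term against $-L^{k+1}/(k+1)$, the two boundary evaluations, and the identity $Y/(1-u)=V/(V-u)$) and it reproduces the stated formula exactly. The trade-off: the paper's route never leaves the domain of validity of the polylog derivative identities, so no convergence or continuation issue arises, whereas your route makes the provenance of each $\operatorname{Li}_j$ completely transparent (each is literally a summed geometric series) but, as you note, the expansion $\sum_n Y^n e^{n\xi}$ converges only where $|V/(V-u)|<1$ along the integration segment, forcing a final analytic-continuation step. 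That step is legitimate but is the one place needing care: you must check that the open set on which all the $\operatorname{Li}_j$ and $\log$ arguments avoid their cuts is connected to the convergence region, so that the identity propagates to all of the claimed domain $u,V\in\mathbb{D}'$, $u\neq V$. With that caveat addressed, your argument is a valid alternative proof.
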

Substituting $k=1$ in the above theorem, we recover the formula \eqref{CK at p/q} with $m=1$ given by Choie and Kumar. 
\begin{corollary}\label{Particular for GEN at 1/n}
For $(u,v) \in \mathbb{D} \times \mathbb{D}'$ with $ u \neq v$, we have
\begin{align}\label{RC at 1/n}
\mathcal{F}\left(\frac{1}{n};u,v\right) = n\textup{Li}_2 (u) + \sum_{\beta^n = 1} \left\{\textup{Li}_2 \left(\frac{\beta v^{\frac{1}{n}}}{\beta v^{\frac{1}{n}} - 1}\right) - \textup{Li}_2 \left(\frac{u-\beta v^{\frac{1}{n}}}{1-\beta v^{\frac{1}{n}}}\right) \right\}.
\end{align}
\end{corollary}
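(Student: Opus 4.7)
The plan is to specialize Theorem \ref{GEN at 1/n} at $k=1$, use $\operatorname{Li}_1(z)=-\log(1-z)$ to convert one term into a logarithm, and then apply a single dilogarithm identity to bring the summand into the Choie--Kumar form. The factor $n\operatorname{Li}_2(u)$ will then appear simply because $\operatorname{Li}_2(u)$ does not depend on the summation variable $\beta$.

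First I would substitute $k=1$ in Theorem \ref{GEN at 1/n}. The inner sum in $j$ has only two terms, the factor $\frac{k!}{(k+1-j)!}$ becomes $\frac{1}{(2-j)!}$, and the trailing contribution is $(-1)^{k+1}k!\operatorname{Li}_{k+1}(\cdot)=\operatorname{Li}_2(\cdot)$. I would rewrite the $j=1$ term via $\operatorname{Li}_1(z)=-\log(1-z)$ after computing
\[
1-\frac{\beta v^{1/n}(u-1)}{u-\beta v^{1/n}}=\frac{u(1-\beta v^{1/n})}{u-\beta v^{1/n}}.
\]
Writing $a=\beta v^{1/n}$ for brevity, the resulting expression is
\[
\mathcal{F}\left(\tfrac{1}{n};u,v\right)=\sum_{\beta^n=1}\left\{-\log(1-u)\log\left(\tfrac{u(1-a)}{u-a}\right)-\operatorname{Li}_2\left(\tfrac{a(u-1)}{u-a}\right)+\operatorname{Li}_2\left(\tfrac{a}{a-u}\right)\right\}.
\]

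The main step is then to verify, for each fixed $a$, the dilogarithm identity
\[
-\log(1-u)\log\left(\tfrac{u(1-a)}{u-a}\right)-\operatorname{Li}_2\left(\tfrac{a(u-1)}{u-a}\right)+\operatorname{Li}_2\left(\tfrac{a}{a-u}\right)=\operatorname{Li}_2(u)+\operatorname{Li}_2\left(\tfrac{a}{a-1}\right)-\operatorname{Li}_2\left(\tfrac{u-a}{1-a}\right).
\]
I would prove this by differentiating both sides in $u$; a direct computation shows that each derivative simplifies to $-\dfrac{\log(1-a)}{u-a}+\dfrac{a\log(1-u)}{u(u-a)}$, so the two sides agree up to a constant in $u$. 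To pin that constant, evaluate at $u=0$: on the left, $\operatorname{Li}_2(0)=0$ and the other two dilogarithm arguments both equal $a/(a-1)$ and cancel; on the right, both dilogarithm arguments reduce to $1$ and cancel, while the log--log product vanishes because $\log(1-u)=0$. Alternatively, one could cascade Landen's identity $\operatorname{Li}_2(z)+\operatorname{Li}_2\left(\tfrac{z}{z-1}\right)=-\tfrac{1}{2}\log^2(1-z)$ applied to the arguments $a/(a-u)$ and $a(u-1)/(u-a)$, but the derivative route is cleaner since it avoids tracking the resulting log-squared corrections.

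Summing the rewritten summand over the $n$-th roots of unity, the $\beta$-independent term $\operatorname{Li}_2(u)$ contributes $n\operatorname{Li}_2(u)$, yielding the stated formula. The main obstacle is the dilogarithm identity itself: although routine, matching the two different clusters of dilogarithms demands either a careful derivative-plus-boundary argument or a tidy cascade of Landen transformations with precise bookkeeping of the log-squared terms. Everything else is direct substitution and elementary algebra on the polylogarithm arguments.
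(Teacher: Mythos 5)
Your proposal is correct and follows essentially the same route as the paper: specialize Theorem \ref{GEN at 1/n} at $k=1$, rewrite the $\operatorname{Li}_1$ term as a logarithm, and apply one dilogarithm identity per summand --- indeed, the identity you propose to verify by differentiation is exactly Abel's five-term relation (quoted in the paper from Lewin, equivalently Rogers' identity \eqref{dilog fn eqn}) specialized to $x=u$, $y=\beta v^{1/n}(u-1)/(u-\beta v^{1/n})$, which the paper cites rather than reproves. The only blemish is that in your boundary check at $u=0$ you have interchanged the descriptions of the two sides (it is the left-hand side whose two dilogarithm arguments tend to $1$ and whose log--log product vanishes, and the right-hand side whose arguments reduce to $a/(a-1)$); since both sides do vanish there, the conclusion stands.
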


Letting $u$ tends to $1^-$ in Theorem \ref{GEN at 1/n}, one can see that the first term will vanish and hence we obtain the result below.
\begin{corollary}
For any $ n \in \mathbb{N}$ and $v \in \mathbb{D}'$, we have
\begin{align}\label{GEN at u=1 and 1/n}
\mathcal{F}_{k}\left(\frac{1}{n};1,v\right) &= (-1)^{k+1}\, k! \sum_{\beta^n=1} \operatorname{Li}_{k+1}\left(\frac{\beta v^\frac{1}{n}}{\beta v^\frac{1}{n}-1}\right).
\end{align}
In particular, when $n=1$, $v=-1$ in \eqref{GEN at u=1 and 1/n}, we get
\begin{align}
\mathcal{F}_{k}\left(1;1,-1\right) &= (-1)^{k+1}\, k! \, \operatorname{Li}_{k+1}\left(\frac{1}{2}\right).
\end{align}
\end{corollary}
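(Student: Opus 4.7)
The plan is to deduce both identities by passing to the limit $u \to 1^-$ in Theorem \ref{GEN at 1/n}. For $v \in \mathbb{D}'$, the factor $1/(v^{-1} - t)$ is continuous and bounded on $[0,1]$, and $|\log(1 - u t^{1/n})|^k$ is dominated uniformly in $u \in [0,1]$ by $|\log(1 - t^{1/n})|^k$, which is integrable on $[0,1]$ (the only issue is at $t=1$, where $\log(1-t^{1/n}) \sim \log(1-t)$). Dominated convergence then gives
$$\lim_{u\to 1^-}\mathcal{F}_k\left(\tfrac{1}{n};u,v\right) = \mathcal{F}_k\left(\tfrac{1}{n};1,v\right),$$
so it suffices to compute the same limit on the right-hand side of Theorem \ref{GEN at 1/n}.

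Next, I would show that the entire inner sum over $j$ vanishes in the limit. For each $n$-th root of unity $\beta$, the argument
$$x_\beta(u) := \frac{\beta v^{1/n}(u-1)}{u - \beta v^{1/n}}$$
tends to $0$ as $u \to 1^-$, and the power series expansion $\operatorname{Li}_j(x) = x + O(x^2)$ near the origin yields $\operatorname{Li}_j(x_\beta(u)) = O(1-u)$. Consequently, the $j$-th summand of the inner sum in Theorem \ref{GEN at 1/n} behaves like $\log^{k+1-j}(1-u) \cdot O(1-u)$. For $1 \leq j \leq k$ this tends to $0$ because any positive integer power of $|\log(1-u)|$ is absorbed by the single factor $(1-u)$; for $j = k+1$ the log factor is $\log^0(1-u) = 1$, but $\operatorname{Li}_{k+1}(x_\beta(u)) \to 0$ on its own. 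So the inner sum disappears in the limit.

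The only surviving contribution is the standalone term $(-1)^{k+1} k!\, \operatorname{Li}_{k+1}\bigl(\beta v^{1/n}/(\beta v^{1/n} - u)\bigr)$, and continuity at $u=1$ yields the first identity—provided its argument $\beta v^{1/n}/(\beta v^{1/n} - 1)$ avoids the branch cut $[1,\infty)$ of $\operatorname{Li}_{k+1}$, which is ensured by the hypothesis $v \in \mathbb{D}'$ (excluding $v = 1$). For the particular case $n=1$, $v=-1$, the sum over $\beta^n = 1$ collapses to the single term $\beta = 1$ with $v^{1/n} = -1$, so the argument becomes $-1/(-1-1) = 1/2$, giving $\mathcal{F}_k(1;1,-1) = (-1)^{k+1} k!\, \operatorname{Li}_{k+1}(1/2)$. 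The only mildly delicate point in this plan is the branch-cut verification for the surviving polylogarithm; the rest is a routine limiting argument since the outer sum over $\beta$ is finite, so interchange of limit and sum is immediate.
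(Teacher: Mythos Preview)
Your proposal is correct and follows exactly the approach the paper indicates: take $u \to 1^-$ in Theorem \ref{GEN at 1/n} and observe that the inner sum over $j$ vanishes, leaving only the standalone $\operatorname{Li}_{k+1}$ term. The paper states this in one sentence without justification, whereas you supply the analytic details (dominated convergence on the left, the $O(1-u)$ estimate killing each $\log^{k+1-j}(1-u)$ factor on the right), but the route is the same.
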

\begin{remark}
For $k \geq 4$, an explicit evaluation of $\operatorname{Li}_{k+1}\left(\frac{1}{2}\right)$ in terms of well-known functions is challenging. In 1995, Borwein et. al. \cite{Borwein} showed that $$\operatorname{Li}_{4}\left(\frac{1}{2}\right)= \frac{\pi^4}{360} -\frac{1}{24} \log^4(2) + \frac{\pi^2}{24} \log^2(2) -\frac{1}{2} \zeta(\bar{3}, \bar{1}),
$$
where $\zeta(\bar{3}, \bar{1})= \sum_{m>n=1}^\infty (-1)^{m+n} m^{-3} n^{-1}$. More generally, Broadhurst \cite{Broadhurst} expressed $\operatorname{Li}_{k}\left(\frac{1}{2}\right)$ in terms of special value of the multiple zeta function, namely, $$\operatorname{Li}_{k}\left(\frac{1}{2}\right) = - \zeta(\bar{1}, \bar{1}, \{1\}^{k-2}) = - \sum_{n_k> \dots >n_2>n_1=1}^\infty \frac{(-1)^{n_1+n_2} }{n_1 n_2 \cdots n_k},$$
where $\{1\}^{k-2}$ denotes $(k-2)$-tuple with all entries equal to $1$.  
\end{remark}

Theorem \ref{GEN at 1/n} does not hold for the case $u = v$. So, when $u=v$, we have the following result.
\begin{theorem}\label{u=v and x=1}
 For any $u \in \mathbb{D'}$, we have
\begin{align}
\mathcal{F}_k\left(1;u,u\right)&=-\frac{1}{1+k} \log^{1+k} \left(1-u \right).
\end{align}
\end{theorem}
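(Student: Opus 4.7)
The plan is to evaluate $\mathcal{F}_k(1;u,u)$ directly from its integral definition; the observation is that at $z=1$ with $v=u$, the integrand becomes (up to a constant factor) an exact derivative, so no polylogarithmic machinery is needed. Starting from the definition \eqref{AHZN},
\begin{align*}
\mathcal{F}_k(1;u,u) = \int_0^1 \frac{\log^k(1-ut)}{u^{-1}-t}\, dt,
\end{align*}
I would first rewrite the kernel as $(u^{-1}-t)^{-1} = u/(1-ut)$ to obtain
\begin{align*}
\mathcal{F}_k(1;u,u) = \int_0^1 \frac{u\log^k(1-ut)}{1-ut}\, dt.
\end{align*}

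Next I apply the substitution $s = 1-ut$, so $ds = -u\, dt$ and the interval $t \in [0,1]$ is mapped to the straight segment running from $s=1$ to $s=1-u$. The integral becomes
\begin{align*}
\mathcal{F}_k(1;u,u) = \int_{1-u}^{1} \frac{\log^k s}{s}\, ds = \left[\frac{\log^{k+1}(s)}{k+1}\right]_{1-u}^{1} = -\frac{\log^{k+1}(1-u)}{k+1},
\end{align*}
which is exactly the claimed identity.

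The only point requiring care is the branch of the logarithm along the substitution path, which I regard as the main (very mild) obstacle. For $u \in \mathbb{D}'$, i.e.\ $0 < |u| \le 1$ with $u \neq 1$, one checks that $\Re s(t) = 1 - t\,\Re u > 0$ for all $t \in [0,1]$, since $\Re u \le |u| \le 1$ with equality forcing $u=1$, which is excluded. Consequently the segment $s(t) = 1-ut$ lies in the open right half-plane, where the principal branch of $\log$ is holomorphic and coincides with the branch of $\log(1-ut)$ used in the original definition; all of the manipulations above are therefore valid within a single holomorphic branch, completing the proof.
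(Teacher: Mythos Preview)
Your proof is correct and follows essentially the same approach as the paper: both recognize that the integrand $\frac{\log^k(1-ut)}{u^{-1}-t}$ is an exact derivative of $-\frac{1}{k+1}\log^{k+1}(1-ut)$, the paper stating this directly via the chain rule while you make it explicit through the substitution $s=1-ut$. Your additional remark that the segment $1-ut$ stays in the right half-plane for $u\in\mathbb{D}'$ is a nice justification of the branch consistency that the paper leaves implicit.
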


\begin{remark}
For $k=2$, the above result matches with Theorem \ref{for u=v=w} since $\mathcal{F}_2\left(1;u,u\right) = \mathcal{F}\left(1;u,u, u\right).$
\end{remark}

\section{Some Important Lemmas}
In this section, we give a few important lemmas that will help us to prove our main results. To calculate the value of $\mathcal{F}(z;u,v,w)$ at $z= \frac{p}{q}$, we need the value of $\mathcal{F}(1;u,v,w)$ where 
\begin{align*}
\mathcal{F}(1;u,v,w) :=\int_{0}^1 \frac{\log(1-ut)\log(1-wt)}{v^{-1}-t}\text{d}t.
\end{align*}
To evaluate the above integral, we use the following Lemmas along with the below properties of polylog function:
\begin{align}
\operatorname{Li}_{1}(z) &= -\log(1-z), \label{log to Li1}\\
\frac{\mathrm{d}}{\mathrm{d}z}\operatorname{Li}_{s+1}(z) &= \frac{\operatorname{Li}_s(z)}{z}. \label{RBL}
\end{align}

\begin{lemma}\label{lemma 1} 
Let $u, v, w \in \mathbb{D}'$ such that $u \neq v, \,\,  v \neq w$. Then, we have
\begin{align*}
I(u, v, w) & := \int_{0}^1 \log^2 \left(\frac{(v-u)(wt-1)}{(v-w)(ut-1)}\right)\frac{v}{(vt-1)} \mathrm{d}t  \\
&= \log^2 \left(\frac{(v-u)(w-1)}{(v-w)(u-1)}\right) \log\left(\frac{w(v-1)}{v-w}\right) -\log^2\left(\frac{v-u}{v-w}\right) \log\left(\frac{w}{w-v}\right) \\
& + 2\log \left(\frac{(v-u)(w-1)}{(v-w)(u-1)}\right)\bigg[\operatorname{Li}_{2}\left(\frac{(v-u)(w-1)}{(v-w)(u-1)}\right) -\operatorname{Li}_{2}\left(\frac{u(1-w)}{w(1-u)}\right)\bigg] \\
& - 2\log\left(\frac{v-u}{v-w}\right)\bigg[\operatorname{Li}_{2}\left(\frac{v-u}{v-w}\right)-\operatorname{Li}_{2}\left(\frac{u}{w}\right)\bigg]+2\operatorname{Li}_{3}\left(\frac{u(1-w)}{w(1-u)}\right)\\
&-2\operatorname{Li}_{3}\left(\frac{(v-u)(w-1)}{(v-w)(u-1)}\right)-2\operatorname{Li}_{3}\left(\frac{u}{w}\right)+2\operatorname{Li}_{3}\left(\frac{v-u}{v-w}\right).
\end{align*}

\end{lemma}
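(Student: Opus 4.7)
The plan is to reduce $I(u,v,w)$ to a standard trilogarithm integral via a M\"obius substitution. Setting
\begin{equation*}
s \;=\; \phi(t) \;:=\; \frac{(v-u)(wt-1)}{(v-w)(ut-1)},
\end{equation*}
the new endpoints are $s_0 := \phi(0) = \frac{v-u}{v-w}$ and $s_1 := \phi(1) = \frac{(v-u)(w-1)}{(v-w)(u-1)}$. Inverting $\phi(t)$ explicitly, computing $dt/ds$, and performing a partial-fraction decomposition of the product $\frac{v}{vt-1}\cdot\frac{dt}{ds}$ in terms of $s$ produces the key identity
\begin{equation*}
\frac{v\, dt}{vt-1} \;=\; \left(\frac{1}{s-1} - \frac{1}{s-a}\right) ds, \qquad a \;:=\; \frac{w(v-u)}{u(v-w)},
\end{equation*}
so that
\begin{equation*}
I(u,v,w) \;=\; \int_{s_0}^{s_1} \log^{2}(s)\left(\frac{1}{s-1} - \frac{1}{s-a}\right) ds.
\end{equation*}

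For each $b \in \{1, a\}$, integrating by parts with $U = \log^{2}(s)$ and $V = \log(1 - s/b)$ and twice applying the polylogarithm derivatives in \eqref{log to Li1} and \eqref{RBL} produces the antiderivative
\begin{equation*}
\int \frac{\log^{2} s}{s-b}\, ds \;=\; \log^{2}(s)\log\left(1 - \frac{s}{b}\right) + 2\log(s)\,\operatorname{Li}_{2}\!\left(\frac{s}{b}\right) - 2\,\operatorname{Li}_{3}\!\left(\frac{s}{b}\right) + C,
\end{equation*}
which can also be verified by differentiation. Subtracting the $b=a$ antiderivative from the $b=1$ one and evaluating between $s_0$ and $s_1$ gives
\begin{equation*}
I \;=\; \left[\log^{2}(s)\log\!\left(\frac{a(1-s)}{a-s}\right) + 2\log(s)\bigl(\operatorname{Li}_{2}(s) - \operatorname{Li}_{2}(s/a)\bigr) - 2\bigl(\operatorname{Li}_{3}(s) - \operatorname{Li}_{3}(s/a)\bigr)\right]_{s_0}^{s_1}.
\end{equation*}

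The final step is algebraic bookkeeping: with $s_0$, $s_1$, and $a$ as above, one checks the four rational identities
\begin{equation*}
\frac{s_0}{a} = \frac{u}{w}, \quad \frac{s_1}{a} = \frac{u(1-w)}{w(1-u)}, \quad \frac{a(1-s_0)}{a-s_0} = \frac{w}{w-v}, \quad \frac{a(1-s_1)}{a-s_1} = \frac{w(v-1)}{v-w},
\end{equation*}
each by direct simplification. Substituting these into the bracketed boundary expression reproduces, term by term, the claimed right-hand side. The main obstacle is spotting the right substitution and verifying the clean partial-fraction identity for $\frac{v\,dt}{vt-1}$; once these are in place, the trilog antiderivative reduces to routine integration by parts and the four endpoint simplifications are elementary. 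The hypotheses $u, v, w \in \mathbb{D}'$ with $u \neq v$ and $v \neq w$ keep the arguments of the logarithms and polylogarithms away from their singularities, so branch choices can be fixed consistently by continuity from a neighborhood of the origin.
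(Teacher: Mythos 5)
Your proposal is correct, and it takes a genuinely different route from the paper. I checked the key steps: with $s=\phi(t)$ one has $s-1=\frac{(w-u)(vt-1)}{(v-w)(ut-1)}$, $s-a=\frac{(v-u)(w-u)}{u(v-w)(ut-1)}$, $1-a=\frac{v(u-w)}{u(v-w)}$ and $ds=\frac{(v-u)(u-w)}{(v-w)(ut-1)^2}\,dt$, so the partial-fraction identity $\frac{v\,dt}{vt-1}=\bigl(\frac{1}{s-1}-\frac{1}{s-a}\bigr)ds$ holds; the antiderivative of $\log^2(s)/(s-b)$ is verified by differentiation using \eqref{log to Li1} and \eqref{RBL}; and the four endpoint identities together with the boundary evaluation reproduce the stated right-hand side term by term. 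The paper's own proof never changes variables: it adds and subtracts $\frac{u}{ut-1}$ in the integrand, integrates by parts twice in $t$ against $\log^2\bigl(\frac{(v-u)(wt-1)}{(v-w)(ut-1)}\bigr)$, and invokes the chain-rule identities $\frac{\mathrm{d}}{\mathrm{d}t}\operatorname{Li}_2$ and $\frac{\mathrm{d}}{\mathrm{d}t}\operatorname{Li}_3$ of those Möbius arguments directly. Your substitution collapses that two-stage integration by parts into the single standard antiderivative and moves all the bookkeeping into four elementary rational identities, which is cleaner and more systematic; what it costs is the need to fix the branch of $\log s$ consistently along the image arc $\phi([0,1])\subset\mathbb{C}$ and to note the degenerate case $u=w$ (where $\phi$ is constant and both sides vanish trivially) — but this is no worse than the branch bookkeeping the paper itself leaves implicit, so the two arguments are at the same level of rigor.
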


\begin{proof} First,  we add and subtract $ \frac{u}{u t-1}$ in the integrand to write as  
\begin{align*}
I(u, v, w) &= \int_{0}^1 \log^2 \left(\frac{(v-u)(wt-1)}{(v-w)(ut-1)}\right)\bigg[\frac{v}{(vt-1)}-\frac{u}{(ut-1)}\bigg]\text{d}t\\
& - \int_{0}^1 \log^2 \left(\frac{(v-u)(wt-1)}{(v-w)(ut-1)}\right)\frac{u}{(1-ut)} \text{d}t.
\end{align*}
Now, we use integration by parts to solve the above integral. In both integrals, we consider $\log^2 \left(\frac{(v-u)(wt-1)}{(v-w)(ut-1)}\right)$ as the first function and we use the following relations for the case of second function: 
\begin{align*}
\frac{\mathrm{d}}{\mathrm{d}t} \log \left(\frac{(u-w)(vt-1)}{(ut-1)(v-w)}\right) & = \frac{v}{(vt-1)}-\frac{u}{(ut-1)},   \\
\frac{\mathrm{d}}{\mathrm{d}t} \log \left(\frac{u-w}{w(ut-1)}\right) & = \frac{u}{(1-ut)}.
\end{align*}
Hence we have
{\allowdisplaybreaks
\begin{align*}
I(u, v, w) &= \log^2 \left(\frac{(v-u)(w-1)}{(v-w)(u-1)}\right)\log\left(\frac{(u-w)(v-1)}{(u-1)(v-w)}\right) - \log^2 \left(\frac{v-u}{v-w}\right)\log\left(\frac{u-w}{v-w}\right)\\
& - 2\int_{0}^1 \log \left(\frac{(v-u)(wt-1)}{(v-w)(ut-1)}\right)\log\left(\frac{(u-w)(vt-1)}{(ut-1)(v-w)}\right)\frac{u-w}{(ut-1)(vt-1)} \mathrm{d}t \\
& - \log^2 \left(\frac{(v-u)(w-1)}{(v-w)(u-1)}\right)\log\left(\frac{u-w}{w(u-1)}\right) + \log^2 \left(\frac{v-u}{v-w}\right)\log\left(\frac{u-w}{w}\right)\\
& + 2\int_{0}^1 \log \left(\frac{(v-u)(wt-1)}{(v-w)(ut-1)}\right)\log\left(\frac{u-w}{w(ut-1)}\right)\frac{u-w}{(ut-1)(wt-1)} \mathrm{d}t.
\end{align*}}
We have two integrals in the last equation. To solve the first integral, we use the relation \eqref{log to Li1} and \eqref{RBL} with $z = \frac{(v-u)(wt-1)}{(v-w)(ut-1)}$ to get
$$\frac{\mathrm{d}}{\mathrm{d}t} \operatorname{Li}_2 \left(\frac{(v-u)(wt-1)}{(v-w)(ut-1)}\right)  = \frac{w-u}{(ut-1)(wt-1)}\log \left(\frac{(u-w)(vt-1)}{(ut-1)(v-w)}\right).$$ 
For the second integral, we again use \eqref{log to Li1} and \eqref{RBL} with $z= \frac{u(wt-1)}{w(ut-1)}$ to obtain
$$\frac{\mathrm{d}}{\mathrm{d}t} \operatorname{Li}_2 \left(\frac{u(wt-1)}{w(ut-1)}\right)  = \frac{w-u}{(ut-1)(wt-1)}\log \left(\frac{u-w}{w(ut-1)}\right),$$
and again use integration by parts. We also simplify the log terms to have
\begin{align}
I(u, v, w) &= \log^2 \left(\frac{(v-u)(w-1)}{(v-w)(u-1)}\right) \log\left(\frac{w(v-1)}{v-w}\right) -\log^2\left(\frac{v-u}{v-w}\right) \log\left(\frac{w}{w-v}\right) \nonumber \\
& + 2\log \left(\frac{(v-u)(w-1)}{(v-w)(u-1)}\right)\operatorname{Li}_{2}\left(\frac{(v-u)(w-1)}{(v-w)(u-1)}\right) -2\log \left(\frac{v-u}{v-w}\right)\operatorname{Li}_{2}\left(\frac{v-u}{v-w}\right) \nonumber \\
&  - 2\int_{0}^1 \frac{u-w}{(ut-1)(wt-1)}\operatorname{Li}_{2} \left(\frac{(v-u)(wt-1)}{(v-w)(ut-1)}\right)\mathrm{d}t \nonumber \\
& + 2\log \left(\frac{(v-u)(w-1)}{(v-w)(u-1)}\right)\operatorname{Li}_{2}\left(\frac{u(w-1)}{w(u-1)}\right) - 2\log \left(\frac{v-u}{v-w}\right)\operatorname{Li}_{2}\left(\frac{u}{w}\right) \nonumber \\
& - 2 \int_{0}^1 \frac{u-w}{(ut-1)(wt-1)}\operatorname{Li}_{2}\left(\frac{u(wt-1)}{w(ut-1)}\right)\text{d}t. \label{Last integral lemma 3.1}
\end{align}
Using \eqref{RBL}, one can verify that
\begin{align*}
\frac{\mathrm{d}}{\mathrm{d}t} \operatorname{Li}_3 \left(\frac{(v-u)(wt-1)}{(v-w)(ut-1)}\right) &= \frac{u-w}{(ut-1)(wt-1)}\operatorname{Li}_2 \left(\frac{(v-u)(wt-1)}{(v-w)(ut-1)}\right),\\
\frac{\mathrm{d}}{\mathrm{d}t} \operatorname{Li}_3 \left(\frac{u(wt-1)}{w(ut-1)}\right) &= \frac{u-w}{(ut-1)(wt-1)}\operatorname{Li}_2 \left(\frac{u(wt-1)}{w(ut-1)}\right).
\end{align*}
Use these values in \eqref{Last integral lemma 3.1} to obtain the final result.
\end{proof}

\begin{lemma}\label{lemma 2}
Let $u, v, w, \in \mathbb{D}'$ such that $ u \neq v \neq w$. We define
\begin{align}
J(u, v, w) := \int_{0}^1 \frac{u}{ut-1}\operatorname{Li}_2\left(\frac{v(1-wt)}{v-w}\right)\mathrm{d}t. \label{J(u,v,w)}
\end{align}
Then,  we have
\begin{align*}
J(u, v, w) + & J(w, v, u)\\
& = \operatorname{Li}_3\left(\frac{v(1-w)}{v-w}\right)-\operatorname{Li}_3\left(\frac{v}{v-w}\right) +\operatorname{Li}_3\left(\frac{v(1-u)}{v-u}\right)-\operatorname{Li}_3\left(\frac{v}{v-u}\right) \\
&+\log\left(\frac{v-u}{v-w}\right)\bigg[\operatorname{Li}_{2}\left(\frac{v}{v-w}\right)+\log\left(\frac{w(u-v)}{u(w-v)}\right) \log\left(\frac{v}{v-w}\right) - \operatorname{Li}_2\left(\frac{v}{v-u}\right)\bigg] \\
&-\log\left(\frac{(v-u)(w-1)}{(v-w)(u-1)}\right)\bigg[\operatorname{Li}_2\left(\frac{v(1-w)}{v-w}\right)+\log\left(\frac{w(u-v)}{u(w-v)}\right)\log\left(\frac{v(1-w)}{v-w}\right) \\
& - \operatorname{Li}_2\left(\frac{v(1-u)}{v-u}\right) \bigg] +\frac{1}{2}\log\left(\frac{w(u-v)}{u(w-v)}\right)\bigg[ \log^{2} \left( \frac{v(1-w)}{v-w}\right)-\log^2\left(\frac{v}{v-w}\right)\bigg]\\
&-\frac{1}{2}\bigg[\log\left(\frac{u(1-v)}{u-v}\right)\log^2\left(\frac{(v-u)(w-1)}{(v-w)(u-1)}\right)-\log^2\left(\frac{v-u}{v-w}\right)\log\left(\frac{u}{u-v}\right)\bigg]\nonumber \\
& - \log\left(\frac{w(u-v)}{u(w-v)}\right)\bigg[\log\left(\frac{v(1-w)}{v-w}\right)\log\left(\frac{w(1-u)}{w-u}\right) - \log\left(\frac{v}{v-w}\right)\log\left(\frac{w}{w-u}\right) \nonumber \\
& + \operatorname{Li}_2 \left(\frac{u(1-w)}{u-w}\right) - \operatorname{Li}_2 \left(\frac{u}{u-w}\right)\bigg] +  \frac{1}{2}I(u, v, w),
\end{align*}
where $I(u,  v,  w)$ as in Lemma \ref{lemma 1}.
\end{lemma}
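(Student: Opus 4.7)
\emph{Proof proposal.} The plan is to convert $J(u,v,w)$ into the target $\operatorname{Li}_3$ terms plus a residual integral which, after two further integrations by parts, yields $\tfrac12 I(u,v,w)$. First, use the partial fraction $\tfrac{u}{ut-1} = \tfrac{w}{wt-1} + \tfrac{w-u}{(ut-1)(wt-1)}$. Setting $F_1(t) := \tfrac{v(1-wt)}{v-w}$, one has $\tfrac{w}{wt-1} = F_1'(t)/F_1(t)$, so the first summand times $\operatorname{Li}_2(F_1)$ equals $\tfrac{d}{dt}\operatorname{Li}_3(F_1(t))$ and integrates to $\operatorname{Li}_3\!\left(\tfrac{v(1-w)}{v-w}\right) - \operatorname{Li}_3\!\left(\tfrac{v}{v-w}\right)$. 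The symmetric step for $J(w,v,u)$ supplies the other two $\operatorname{Li}_3$ terms. What remains is $T(u,v,w) + T(w,v,u)$, where $T(u,v,w) := \int_0^1 \tfrac{(w-u)\operatorname{Li}_2(F_1(t))}{(ut-1)(wt-1)}\,dt$.

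Next, integrate $T(u,v,w)$ by parts using the antiderivative $\Lambda(t) := \log\!\left(\tfrac{(v-u)(wt-1)}{(v-w)(ut-1)}\right)$ of $-\tfrac{w-u}{(ut-1)(wt-1)}$. Since $\Lambda(0) = \log\tfrac{v-u}{v-w}$ and $\Lambda(1) = \log\tfrac{(v-u)(w-1)}{(v-w)(u-1)}$ are exactly the log prefactors of the target, the boundary contributions---combined with their $u\leftrightarrow w$ counterparts---reproduce the two $\operatorname{Li}_2$-header lines of the statement. The residual is $-(U(u,v,w)+U(w,v,u))$, where $U(u,v,w) := \int_0^1 \tfrac{w\Lambda(t)\log((w(vt-1))/(v-w))}{wt-1}\,dt$, so everything reduces to evaluating the symmetric sum $U(u,v,w) + U(w,v,u)$.

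The crux is the observation that $\tfrac{1-F_1(t)}{1-F_2(t)}$ is \emph{$t$-independent}: with $F_2(t) := \tfrac{v(1-ut)}{v-u}$ one computes $\tfrac{1-F_1}{1-F_2} = \tfrac{w(v-u)}{u(v-w)} =: \lambda$, and $\log\lambda = \log\!\left(\tfrac{w(u-v)}{u(w-v)}\right)$, the very factor weighting many of the terms in the statement. Together with the identity $\Lambda = \log F_1 - \log F_2$, this yields the clean decomposition
\begin{equation*}
G'(t) \;=\; -\log\lambda \cdot \tfrac{F_1'(t)}{F_1(t)} \;-\; \log(1-F_2(t))\,\Lambda'(t),
\end{equation*}
where $G := \operatorname{Li}_2(F_1) - \operatorname{Li}_2(F_2)$. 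Since $U(u,v,w)+U(w,v,u) = -\int_0^1 \Lambda\,G'\,dt$, this splits the problem as $\log\lambda\int_0^1 \Lambda\,d\log F_1 \;+\; \int_0^1 \Lambda\Lambda'\log(1-F_2)\,dt$.

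For the second summand, write $\Lambda\Lambda' = \tfrac12(\Lambda^2)'$ and integrate by parts against $\log(1-F_2)$, noting that $\tfrac{d}{dt}\log\!\left(\tfrac{u(vt-1)}{v-u}\right) = \tfrac{v}{vt-1}$; this produces the boundary term $\tfrac12[\Lambda^2\log(1-F_2)]_0^1$ (accounting, after the outer sign of $-(U+U')$, for the $-\tfrac12\bigl[\log\tfrac{u(1-v)}{u-v}\log^2\tfrac{(v-u)(w-1)}{(v-w)(u-1)} - \log^2\tfrac{v-u}{v-w}\log\tfrac{u}{u-v}\bigr]$ line of the statement) together with $-\tfrac12\int_0^1 \Lambda^2\tfrac{v}{vt-1}\,dt = -\tfrac12 I(u,v,w)$ by the definition of $I(u,v,w)$ in Lemma~\ref{lemma 1}, yielding the desired $+\tfrac12 I(u,v,w)$. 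The first summand $\log\lambda\int_0^1 \Lambda\,d\log F_1$, handled by further IBP using $\log F_1 = \log F_2 + \Lambda$ and the standard pattern $\int\log\,d\operatorname{Li}_2 = \mathrm{(boundary)} - \operatorname{Li}_3$, contributes all the remaining $\operatorname{Li}_3$ terms with arguments $\tfrac{u(1-w)}{w(1-u)}$, $\tfrac{(v-u)(w-1)}{(v-w)(u-1)}$, $\tfrac{u}{w}$, $\tfrac{v-u}{v-w}$ and the remaining $\log\lambda$-weighted log-log products of the target. The principal obstacle will be bookkeeping: tracking numerous log-product boundary terms across several nested IBPs, and verifying that $\log(-1)$ branch ambiguities arising from $\log(vt-1)$ versus $\log(1-vt)$ cancel between $U(u,v,w)$ and $U(w,v,u)$---a cancellation forced by $u\leftrightarrow w$ acting as $\Lambda\mapsto -\Lambda$ and $\lambda\mapsto\lambda^{-1}$, so any $i\pi$ constants appearing in the individual pieces annihilate in the symmetric sum.
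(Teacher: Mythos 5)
Your strategy is correct and, as far as I can check, every term it generates lands where it should; but it is organized quite differently from the paper's proof, and in a way that is arguably cleaner. The paper works one-sidedly on $J(u,v,w)$: after the same initial split (their $\tfrac{w}{wt-1}-\tfrac{u}{ut-1}=\tfrac{u-w}{(ut-1)(wt-1)}$ is your partial fraction with the opposite sign convention) it runs a long chain of integrations by parts through auxiliary integrals $J_1,\dots,J_6$, and the factor $\tfrac12$ together with $\tfrac12 I(u,v,w)$ emerges from a self-referential identity $J_4=J_5+J_6$ with $J_5$ containing $-J_4$, while $J(w,v,u)$ only surfaces at the very end inside $J_6$ and is then moved to the left-hand side. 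You instead symmetrize immediately and work with $G=\operatorname{Li}_2(F_1)-\operatorname{Li}_2(F_2)$; your observation that $(1-F_1)/(1-F_2)=\lambda$ is $t$-independent, with $\log\lambda=\log\bigl(\tfrac{w(u-v)}{u(w-v)}\bigr)$, gives the decomposition $G'=-\log\lambda\,(\log F_1)'-\log(1-F_2)\,\Lambda'$ in one stroke, and the $\tfrac12$'s come transparently from $\Lambda\Lambda'=\tfrac12(\Lambda^2)'$ rather than from solving an equation for $J_4$. I verified that your boundary terms $-[\Lambda G]_0^1$ give the $\operatorname{Li}_2$ parts of the two bracketed lines, $-\log\lambda[\Lambda\log F_1]_0^1$ gives their $\log\lambda$-weighted log-log companions, $+\log\lambda\int_0^1\Lambda'\log F_1\,\mathrm{d}t$ gives the $\tfrac12\log\lambda[\log^2-\log^2]$ line and the final $-\log\lambda[\cdots]$ bracket, and $-\int_0^1\Lambda\Lambda'\log(1-F_2)\,\mathrm{d}t$ gives the $-\tfrac12[\cdots]$ boundary line plus $\tfrac12 I(u,v,w)$. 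So the approach buys a shorter derivation with fewer auxiliary integrals, at the price of having noticed the $\lambda$-invariance; the paper's route is more mechanical but needs no such observation.

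One inaccuracy in your final sentence: the summand $-\log\lambda\int_0^1\Lambda\,\mathrm{d}\log F_1$ does \emph{not} contribute any $\operatorname{Li}_3$ terms. Since $(\log F_1)'=\tfrac{w}{wt-1}$ and $\Lambda'=(\log F_1)'-(\log F_2)'$, one integration by parts reduces it entirely to log-products and the two $\log\lambda$-weighted dilogarithms $\operatorname{Li}_2\bigl(\tfrac{u(1-w)}{u-w}\bigr)$, $\operatorname{Li}_2\bigl(\tfrac{u}{u-w}\bigr)$ already visible in the statement. The trilogarithms with arguments $\tfrac{u(1-w)}{w(1-u)}$, $\tfrac{(v-u)(w-1)}{(v-w)(u-1)}$, $\tfrac{u}{w}$, $\tfrac{v-u}{v-w}$ live exclusively inside the unexpanded $\tfrac12 I(u,v,w)$ (via Lemma~\ref{lemma 1}); if your first summand really produced them as well, the final answer would overcount. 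This is only a bookkeeping misattribution, not a gap in the method, but be aware of it when you carry out the computation.
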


\begin{proof}
First,  we write
\begin{align*}
J(u, v, w) &= \int_{0}^1\frac{w}{wt-1}\operatorname{Li}_2\left(\frac{v(1-wt)}{v-w}\right)\text{d}t\\
&-\int_{0}^1 \left(\frac{w}{wt-1} - \frac{u}{ut-1} \right)\operatorname{Li}_2\left(\frac{v(1-wt)}{v-w}\right)\text{d}t.
\end{align*}
To solve the first integral, we take $z = \frac{v(1-wt)}{v-w}$ in \eqref{RBL} to see that 
\begin{align*}
\frac{\mathrm{d}}{\mathrm{d}t} \operatorname{Li}_3 \left(\frac{v(1-wt)}{v-w}\right) &= \frac{w}{wt-1}\operatorname{Li}_2 \left(\frac{v(1-wt)}{v-w}\right).
\end{align*}
For the second integral, we use integration by parts, by taking $\operatorname{Li}_2\left(\frac{v(1-wt)}{v-w}\right)$ as first function and using the following relations:
\begin{align*}
\frac{\mathrm{d}}{\mathrm{d}t} \log\left(\frac{(v-u)(wt-1)}{(v-w)(ut-1)}\right)  &=  \frac{w}{(wt-1)} - \frac{u}{(ut-1)}, \\
\frac{\mathrm{d}}{\mathrm{d}t} \operatorname{Li}_2 \left(\frac{v(1-wt)}{v-w}\right) &= \frac{w}{1-wt}\log\left(\frac{w(1-vt)}{w-v}\right).
\end{align*}
So, we have
\begin{align}
J(u,  v,  w) & = \operatorname{Li}_3\left(\frac{v(1-w)}{v-w}\right)-\operatorname{Li}_3\left(\frac{v}{v-w}\right)+\log\left(\frac{v-u}{v-w}\right)  \operatorname{Li}_{2}\left(\frac{v}{v-w}\right) \nonumber  \\
& -\log\left(\frac{(v-u)(w-1)}{(v-w)(u-1)}\right)\operatorname{Li}_2\left(\frac{v(1-w)}{v-w}\right)  + J_1(u,  v,  w),  \label{J with J1}
\end{align}
where
\begin{align}\label{definition J_1}
J_1(u,  v,  w) :=  \int_{0}^1 \frac{w}{1-wt}\log\left(\frac{w(1-vt)}{w-v}\right)  \log\left(\frac{(v-u)(wt-1)}{(v-w)(ut-1)}\right) \text{d}t.
\end{align}
Now to simplify $ J_1(u,  v,  w),$ we again use integration by parts with the following relation:
\begin{align*}
\frac{\mathrm{d}}{\mathrm{d}t} \log \left(\frac{v(1-wt)}{v-w}\right) &= \frac{w}{wt-1}.
\end{align*}
Thus, one has
\begin{align}
J_1(u,  v,  w) & = - \log\left(\frac{(v-u)(w-1)}{(v-w)(u-1)}\right) \log\left(\frac{w(1-v)}{w-v}\right)\log\left(\frac{v(1-w)}{v-w}\right) \nonumber \\
& + \log\left(\frac{v-u}{v-w}\right)\log\left(\frac{w}{w-v}\right)\log\left(\frac{v}{v-w}\right) \nonumber \\
& + J_2(u,  v,  w) + J_3(u,  v,  w),  \label{J_1}
\end{align}
where 
\begin{align}
& J_2(u,  v,  w) := \int_{0}^1 \frac{v}{vt-1}\log\left(\frac{(v-u)(wt-1)}{(v-w)(ut-1)}\right)\log \left(\frac{v(1-wt)}{v-w}\right)\text{d}t,  \\
& J_3(u,  v,  w) :=  \int_{0}^1 \left[\frac{w}{wt-1}-\frac{u}{ut-1}\right]  \log\left(\frac{w(1-vt)}{w-v}\right)\log\left(\frac{v(1-wt)}{v-w}\right)\text{d}t.
\end{align}
We solve $J_2(u, v, w)$ using the following relation:
 \begin{align}
\frac{\mathrm{d}}{\mathrm{d}t} \log\left(\frac{u(1-vt)}{u-v}\right) = \frac{v}{vt-1}. \label{v integral}
\end{align}
Hence, we have
\begin{align*}
J_2(u,  v,  w) &= \log\left(\frac{(v-u)(w-1)}{(v-w)(u-1)}\right)\log \left(\frac{v(1-w)}{v-w}\right)\log\left(\frac{u(1-v)}{u-v}\right) \\
& - \log\left(\frac{v-u}{v-w}\right)\log \left(\frac{v}{v-w}\right)\log\left(\frac{u}{u-v}\right) \\
& - \int_{0}^1 \frac{w}{wt - 1}\log\left(\frac{(v-u)(wt-1)}{(v-w)(ut-1)}\right)\log\left(\frac{u(1-vt)}{u-v}\right) \text{d}t \\
&- \int_{0}^1 \left[\frac{w}{wt-1}-\frac{u}{ut-1}\right]\log \left(\frac{v(1-wt)}{v-w}\right)\log\left(\frac{u(1-vt)}{u-v}\right)\text{d}t.
\end{align*}
Adding the above expression of  $J_2(u,  v,  w)$ with $J_3(u,  v,  w)$ gives
\begin{align}
J_2(u,  v,  w) + J_3(u,  v,  w) &= \log\left(\frac{(v-u)(w-1)}{(v-w)(u-1)}\right)\log \left(\frac{v(1-w)}{v-w}\right)\log\left(\frac{u(1-v)}{u-v}\right) \nonumber\\
& - \log\left(\frac{v-u}{v-w}\right)\log \left(\frac{v}{v-w}\right)\log\left(\frac{u}{u-v}\right) \nonumber\\
& - \int_{0}^1 \frac{w}{wt-1}\log\left(\frac{(v-u)(wt-1)}{(v-w)(ut-1)}\right)\log\left(\frac{u(1-vt)}{u-v}\right) \text{d}t \nonumber\\
& + \log\left(\frac{w(u-v)}{u(w-v)}\right)\int_{0}^1 \left[\frac{w}{wt-1}-\frac{u}{ut-1}\right] \log\left(\frac{v(1-wt)}{v-w}\right)\text{d}t.\label{J2 + J3}
\end{align}
One can check that the following term in \eqref{J2 + J3} simplifies as 
\begin{align}
\int_{0}^1\frac{w}{wt-1}\log\left(\frac{v(1-wt)}{v-w}\right)\text{d}t &= \frac{1}{2}\bigg[\log^{2} \left( \frac{v(1-w)}{v-w}\right) - \log^{2} \left(\frac{v}{v-w}\right)\bigg]. \label{the last integral}
\end{align}
Again,  for solving $\int_{0}^1 \frac{u}{ut-1}\log\left(\frac{v(1-wt)}{v-w}\right)\text{d}t$ in \eqref{J2 + J3}, we use the following relation:
\begin{align*}
\frac{\mathrm{d}}{\mathrm{d}t} \log\left(\frac{w(1-ut)}{w-u}\right) = \frac{u}{ut-1}.
\end{align*}
Hence, we have
\begin{align}
\int_{0}^1 \frac{u}{ut-1}\log\left(\frac{v(1-wt)}{v-w}\right)\text{d}t &=  \log\left(\frac{v(1-w)}{v-w}\right)\log\left(\frac{w(1-u)}{w-u}\right) \nonumber\\
&- \log\left(\frac{v}{v-w}\right)\log\left(\frac{w}{w-u}\right) \nonumber \\
& + \int_{0}^1 \frac{w}{1-wt}\log\left(\frac{w(1-ut)}{w-u}\right)\text{d}t \label{the last integral1}
\end{align}
We finally use the following formula 
$$\frac{\mathrm{d}}{\mathrm{d}t} \operatorname{Li}_2 \left(\frac{u(1-wt)}{u-w}\right) = \frac{w}{1-wt}\log\left(\frac{w(1-ut)}{w-u}\right), $$
in \eqref{the last integral1} to have
\begin{align}
\int_{0}^1 \frac{u}{ut-1}\log\left(\frac{v(1-wt)}{v-w}\right)\text{d}t &= \log\left(\frac{v(1-w)}{v-w}\right)\log\left(\frac{w(1-u)}{w-u}\right) \nonumber\\
& - \log\left(\frac{v}{v-w}\right)\log\left(\frac{w}{w-u}\right) \nonumber \\
& + \operatorname{Li}_2 \left(\frac{u(1-w)}{u-w}\right) - \operatorname{Li}_2 \left(\frac{u}{u-w}\right). \label{the last integral2}
\end{align}
We also define the remaining integral in \eqref{J2 + J3} as follows:
\begin{align}
J_4(u, v, w) & := \int_{0}^1 \frac{w}{wt-1}\log\left(\frac{(v-u)(wt-1)}{(v-w)(ut-1)}\right)\log\left(\frac{u(1-vt)}{u-v}\right) \text{d}t.  \label{Definition J_4}
\end{align}
We break this integral into two parts,  namely,  
\begin{align}
J_4(u, v, w) = J_5(u, v, w)  + J_6( u,  v,  w),   \label{J_4}
\end{align}
where 
\begin{align}
J_5(u, v, w) & := \int_{0}^1 \left[\frac{w}{(wt-1)} - \frac{u}{(ut-1)}\right]\log\left(\frac{(v-u)(wt-1)}{(v-w)(ut-1)}\right)\log\left(\frac{u(1-vt)}{u-v}\right) \text{d}t, \label{J_5}\\
J_6(u, v, w) & := \int_{0}^1 \frac{u}{(ut-1)}\log\left(\frac{(v-u)(wt-1)}{(v-w)(ut-1)}\right)\log\left(\frac{u(1-vt)}{u-v}\right) \text{d}t. \label{J_6}
\end{align}
To solve $J_5(u, v, w)$, we take product of log terms as first function and remaining terms as second function. We use the following formula:
\begin{align*}
\frac{\mathrm{d}}{\mathrm{d}t} \log \left(\frac{(v-u)(wt-1)}{(v-w)(ut-1)}\right) & = \frac{w}{(wt-1)}-\frac{u}{(ut-1)}.
\end{align*}
Finally, we get
\begin{align}
J_5(u, v, w) & = \log^2 \left(\frac{(v-u)(w-1)}{(v-w)(u-1)}\right)\log\left(\frac{u(1-v)}{u-v}\right) - \log^2 \left(\frac{v-u}{v-w}\right)\log\left(\frac{u}{u-v}\right) \nonumber \\
& - \int_{0}^1 \frac{v}{(vt-1)}\log^2 \left(\frac{(v-u)(wt-1)}{(v-w)(ut-1)}\right)\text{d}t \nonumber \\
& - \int_{0}^1 \left[\frac{w}{(wt-1)}-\frac{u}{(ut-1)}\right]\log \left(\frac{(v-u)(wt-1)}{(v-w)(ut-1)}\right)\log\left(\frac{u(1-vt)}{u-v}\right)\text{d}t \nonumber \\
& = \log^2 \left(\frac{(v-u)(w-1)}{(v-w)(u-1)}\right)\log\left(\frac{u(1-v)}{u-v}\right) - \log^2 \left(\frac{v-u}{v-w}\right)\log\left(\frac{u}{u-v}\right) \nonumber \\
& - I(u, v, w) - J_4(u, v, w) + J_6(u, v, w),  \label{J_5 solved}
\end{align}
where $I(u,v,w)$ is same as in Lemma \ref{lemma 1} and $J_4(u, v, w)$ and $J_6(u, v, w)$ are defined as in \eqref{Definition J_4}, \eqref{J_6}. To solve $J_6(u, v, w)$, we take $z = \frac{v(1-ut)}{v-u}$ in \eqref{RBL} to see that
$$\frac{\mathrm{d}}{\mathrm{d}t} \operatorname{Li}_2 \left(\frac{v(1-ut)}{v-u}\right) = -\frac{u}{(ut-1)}\log \left(\frac{u(1-vt)}{u-v}\right).$$
Using integration by parts and the above relation, we have
\begin{align}
J_6(u, v, w) & = - \log \left(\frac{(v-u)(w-1)}{(v-w)(u-1)}\right) \operatorname{Li}_2 \left(\frac{v(1-u)}{v-u}\right) + \log \left(\frac{v-u}{v-w}\right)\operatorname{Li}_2 \left(\frac{v}{v-u}\right) \nonumber \\
&  + \int_{0}^1 \left[\frac{w}{(wt-1)}-\frac{u}{(ut-1)}\right]\operatorname{Li}_2 \left(\frac{v(1-ut)}{v-u}\right)\text{d}t. \label{J_6 solved 1}
\end{align}
Now using \eqref{RBL}, we have
\begin{align}
\frac{\mathrm{d}}{\mathrm{d}t} \operatorname{Li}_3 \left(\frac{v(1-ut)}{v-u}\right) &= \frac{u}{ut-1}\operatorname{Li}_2\left(\frac{v(1-ut)}{v-u}\right). \label{Derivative of Li3}
\end{align}
Employing this relation in \eqref{J_6 solved 1}, one gets
\begin{align}
J_6(u, v, w) & = - \log \left(\frac{(v-u)(w-1)}{(v-w)(u-1)}\right) \operatorname{Li}_2 \left(\frac{v(1-u)}{v-u}\right) + \log \left(\frac{v-u}{v-w}\right)\operatorname{Li}_2 \left(\frac{v}{v-u}\right) \nonumber \\
&  -\operatorname{Li}_3\left(\frac{v(1-u)}{v-u}\right) +\operatorname{Li}_3\left(\frac{v}{v-u}\right) + \int_{0}^1 \frac{w}{(wt-1)}\operatorname{Li}_2 \left(\frac{v(1-ut)}{v-u}\right)\text{d}t. \label{J_6 solved 2}
\end{align}
Note that the last integral term is nothing but $J(w, v ,u)$.
Putting the values of $J_5(u, v, w)$ \eqref{J_5 solved} and $J_6(u,  v,  w)$  \eqref{J_6 solved 2} in \eqref{J_4}, we have
\begin{align}
J_4(u, v, w) &= \log^2 \left(\frac{(v-u)(w-1)}{(v-w)(u-1)}\right)\log\left(\frac{u(1-v)}{u-v}\right) - \log^2 \left(\frac{v-u}{v-w}\right)\log\left(\frac{u}{u-v}\right) \nonumber \\
& - I(u, v, w) - J_4(u, v, w) - 2\log \left(\frac{(v-u)(w-1)}{(v-w)(u-1)}\right) \operatorname{Li}_2 \left(\frac{v(1-u)}{v-u}\right) \nonumber \\
& + 2\log \left(\frac{v-u}{v-w}\right)\operatorname{Li}_2 \left(\frac{v}{v-u}\right) - 2\operatorname{Li}_3\left(\frac{v(1-u)}{v-u}\right) +2\operatorname{Li}_3\left(\frac{v}{v-u}\right)\nonumber \\
&  + 2\int_{0}^1 \frac{w}{(wt-1)}\operatorname{Li}_2 \left(\frac{v(1-ut)}{v-u}\right)\text{d}t.
\end{align}
The last integral in the above equation is nothing but $J(w, v, u)$. Thus, simplifying further one has
\begin{align}
J_4(u, v, w) &= \frac{1}{2}\bigg[\log^2 \left(\frac{(v-u)(w-1)}{(v-w)(u-1)}\right)\log\left(\frac{u(1-v)}{u-v}\right) - \log^2 \left(\frac{v-u}{v-w}\right)\log\left(\frac{u}{u-v}\right) \bigg] \nonumber \\
&-\frac{1}{2}I(u, v, w) - \log \left(\frac{(v-u)(w-1)}{(v-w)(u-1)}\right) \operatorname{Li}_2 \left(\frac{v(1-u)}{v-u}\right) + \log\left(\frac{v-u}{v-w}\right)\operatorname{Li}_2\left(\frac{v}{v-u}\right) \nonumber \\
& -\operatorname{Li}_3\left(\frac{v(1-u)}{v-u}\right) + \operatorname{Li}_3\left(\frac{v}{v-u}\right) + J(w, v, u). \label{J_4 solved}
\end{align}
Substituting the above value of $J_4(u,  v,  w)$ and \eqref{the last integral}, \eqref{the last integral2} in \eqref{J2 + J3}, and then together with \eqref{J_1} and \eqref{J with J1}, we get the final answer.
\end{proof}

\begin{lemma}\label{f11}
For any $ u ,v \in \mathbb{L}' $ such that $u \neq v $, we have
\begin{align}
\mathcal{F}_k(1;u,v)&:= \int_{0}^1 \frac{\log^k(1-ut)}{v^{-1}-t}\text{d}t\nonumber\\
&=\sum_{j=1}^{k+1}\bigg\{ (-1)^{j-1} \log^{k+1-j}(1-u) \operatorname{Li}_j\left(\frac{v(u-1)}{u-v}\right)\frac{k!}{(k+1-j)!}\bigg\} \nonumber \\
& +(-1)^{k+1} k! \operatorname{Li}_{k+1}\left(\frac{v}{v-u}\right).
\end{align}
\end{lemma}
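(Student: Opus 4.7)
\medskip

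\noindent\textbf{Proof proposal for Lemma \ref{f11}.} Define $L_j(t) := \operatorname{Li}_j\!\left(\tfrac{v(ut-1)}{u-v}\right)$ for $j \geq 1$. Two facts underlie the plan. First, by \eqref{log to Li1},
\begin{equation*}
L_1(t) = -\log\!\left(\tfrac{u(1-vt)}{u-v}\right), \qquad L_1'(t) = \frac{v}{1-vt} = \frac{1}{v^{-1}-t},
\end{equation*}
so $L_1$ is an antiderivative of the kernel of $\mathcal{F}_k(1;u,v)$. Second, with $z(t) := v(ut-1)/(u-v)$ one has $z'(t)/z(t) = u/(ut-1)$, so \eqref{RBL} gives the differentiation rule
\begin{equation*}
L_{j+1}'(t) = \frac{u}{ut-1}\,L_j(t) \qquad (j \geq 1).
\end{equation*}

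The plan is then to write $\mathcal{F}_k(1;u,v) = \int_0^1 \log^k(1-ut)\,L_1'(t)\,\mathrm{d}t$ and integrate by parts $k$ times. Each step uses the differentiation rule above to absorb the factor $u/(1-ut)$ produced by differentiating $\log^k(1-ut)$: it promotes the polylog index $j \mapsto j+1$ and drops the exponent of $\log(1-ut)$ by one, at the cost of a sign. All boundary contributions at $t = 0$ vanish because $\log^s(1) = 0$ for $s \geq 1$. Concretely, I would establish by induction on $m$ with $1 \le m \le k$ the intermediate formula
\begin{equation*}
\mathcal{F}_k(1;u,v) = \sum_{j=1}^{m}(-1)^{j-1}\frac{k!}{(k+1-j)!}\log^{k+1-j}(1-u)\,L_j(1) + (-1)^m\frac{k!}{(k-m)!}\int_0^1 L_{m+1}'(t)\log^{k-m}(1-ut)\,\mathrm{d}t.
\end{equation*}

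Setting $m = k$ reduces the residual integral to $L_{k+1}(1) - L_{k+1}(0)$, where $L_{k+1}(1) = \operatorname{Li}_{k+1}\!\left(\tfrac{v(u-1)}{u-v}\right)$ and $L_{k+1}(0) = \operatorname{Li}_{k+1}\!\left(\tfrac{v}{v-u}\right)$. The $L_{k+1}(1)$ contribution slots in precisely as the $j = k+1$ term of the sum (sign and factorial coefficient agree), while $-(-1)^k k!\,L_{k+1}(0) = (-1)^{k+1} k!\operatorname{Li}_{k+1}\!\left(\tfrac{v}{v-u}\right)$ furnishes the isolated trailing term in the statement.

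No conceptual obstacle is expected; the only finicky part is tracking signs and factorials across the $k$ integration-by-parts steps. This is automatic: the inductive transition from stage $m$ to stage $m+1$ splits the remainder into a boundary piece, which becomes the $j = m+1$ summand with the correct coefficient $(-1)^m k!/(k-m)!$, and a new integral carrying a factor $-(k-m)$ that turns $k!/(k-m)!$ into $k!/(k-m-1)!$, matching the induction hypothesis at the next stage.
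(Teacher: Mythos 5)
Your proposal is correct and follows essentially the same route as the paper: repeated integration by parts against the antiderivative $-\log\bigl(\tfrac{u(1-vt)}{u-v}\bigr)$ of the kernel, promoting the polylogarithm index by one at each step via $\tfrac{\mathrm{d}}{\mathrm{d}t}\operatorname{Li}_{j+1}\bigl(\tfrac{v(1-ut)}{v-u}\bigr)=\tfrac{u}{ut-1}\operatorname{Li}_{j}\bigl(\tfrac{v(1-ut)}{v-u}\bigr)$, with vanishing boundary terms at $t=0$. Your explicit induction on the intermediate stage $m$ is just a cleaner bookkeeping of the paper's ``continuing this process'' step; the signs, factorials, and the final split into the $j=k+1$ summand and the trailing $\operatorname{Li}_{k+1}\bigl(\tfrac{v}{v-u}\bigr)$ term all check out.
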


\begin{proof}
From the definition \eqref{AHZN}, we have
\begin{align*}
\mathcal{F}_k(1;u,v)& = \int_{0}^1 \frac{\log^k(1-ut)}{v^{-1}-t}\text{d}t.
\end{align*}
Now we proceed by considering $\log^k(1-ut)$ as the first function and use \eqref{v integral} to get
\begin{align*}
\mathcal{F}_k(1;u,v) &= -\log^k(1-u) \log\left(\frac{u(1-v)}{u-v}\right)-k \int_{0}^1 \log^{k-1}(1-ut) \frac{u}{1-ut} \log\left(\frac{u(1-vt)}{u-v}\right)\text{d}t.
\end{align*}
Further utilizing the following relation 
$$\frac{\mathrm{d}}{\mathrm{d}t} \operatorname{Li}_2 \left(\frac{v(1-ut)}{v-u}\right) = \frac{u}{(1-ut)}\log \left(\frac{u(1-vt)}{u-v}\right),$$
we get,
\begin{align*}
\mathcal{F}_k(1;u,v) &= -\log^k(1-u) \log\left(\frac{u(1-v)}{u-v}\right)-k\log^{k-1}(1-u) \textup{Li}_2 \left(\frac{v(1-u)}{v-u}\right)\\
& - k(k-1) \int_{0}^1 \log^{k-2}(1-ut) \frac{u}{1-ut} \textup{Li}_2 \left(\frac{v(1-ut)}{v-u}\right)\text{d}t
\end{align*}
To solve the integral in the above equation, we use following relation
$$\frac{\mathrm{d}}{\mathrm{d}t} \operatorname{Li}_3 \left(\frac{v(1-ut)}{v-u}\right) = -\frac{u}{(1-ut)}\operatorname{Li}_2 \left(\frac{v(1-ut)}{v-u}\right).$$
So, we have
\begin{align*}
\mathcal{F}_k(1;u,v) &= -\log^k(1-u) \log\left(\frac{u(1-v))}{u-v}\right)-k\log^{k-1}(1-u) \textup{Li}_2 \left(\frac{v(1-u)}{v-u}\right)\\
& + k(k-1)\log^{k-2}(1-u) \textup{Li}_3\left(\frac{v(1-u)}{v-u}\right) \\
& + k(k-1)(k-2) \int_{0}^1 \log^{k-3}(1-ut) \frac{u}{1-ut} \textup{Li}_3\left(\frac{v(1-ut)}{v-u}\right)\text{d}t.
\end{align*}
Continuing this process by taking $z= \frac{v(1-ut)}{v-u}$ in \eqref{RBL}, we get
\begin{align*}
\mathcal{F}_k(1;u,v) &= -\log^k(1-u)\log\left(\frac{u(1-v)}{u-v}\right)-k\log^{k-1}(1-u)\textup{Li}_2 \left(\frac{v(1-u)}{v-u}\right)\nonumber \\
&+k(k-1)\log^{k-2}(1-u) \textup{Li}_3\left(\frac{v(1-u)}{v-u}\right)+\cdots + (-1)^{k-1} k!\, \log(1-u) \textup{Li}_{k}\left(\frac{v(1-u)}{v-u}\right) \\
&+ (-1)^{k+1} k! \int_0^1 \frac{u}{1-ut} \textup{Li}_k\left(\frac{v(1-ut)}{v-u}\right)\text{d}t.\\
\end{align*}
Using \eqref{log to Li1}, we have
\begin{align*}
\mathcal{F}_k(1;u,v) &= \log^k(1-u)\textup{Li}_1\left(\frac{v(1-u)}{v-u}\right)-k\log^{k-1}(1-u)\textup{Li}_2 \left(\frac{v(1-u)}{v-u}\right)\nonumber \\
&+k(k-1)\log^{k-2}(1-u) \textup{Li}_3\left(\frac{v(1-u)}{v-u}\right)+\cdots + (-1)^{k-1} k!\, \log(1-u) \textup{Li}_{k}\left(\frac{v(1-u)}{v-u}\right) \\
& + (-1)^{k} k! \, \textup{Li}_{k+1}\left(\frac{v(1-u)}{v-u}\right) + (-1)^{k+1} k! \, \textup{Li}_{k+1}\left(\frac{v}{v-u}\right)\nonumber\\
&=\sum_{j=1}^{k+1} (-1)^{j-1} \log^{k+1-j}(1-u) \textup{Li}_j\left(\frac{v(1-u)}{v-u}\right)\frac{k!}{(k+1-j)!} \\
& + (-1)^{k+1}\, k!\, \textup{Li}_{k+1}\left(\frac{v}{v-u}\right).\nonumber
\end{align*}
This finishes the proof.
\end{proof}
\begin{remark} Note that the function $\mathcal{F}_2(z;u,v)$ is same  as $\mathcal{F}(1;u,v,w)$ when $u=w$. Therefore, from Lemma \ref{f11}, we have
\begin{align}
\mathcal{F}(1;u,v,u) = &-\log^2(1-u) \log\left(\frac{u(1-v)}{u-v}\right)-2\log(1-u) \textup{Li}_2 \left(\frac{v(u-1)}{u-v}\right)+2\textup{Li}_3\left(\frac{v(u-1)}{u-v}\right)\nonumber\\
&-2\textup{Li}_3 \left(\frac{v}{v-u}\right).  \label{at u=w and any v}
\end{align}

\end{remark}

\section{Proof of Main Results}
This section focuses on presenting the proofs of the main results outlined in this paper.
\begin{proof}[Theorem \rm{\ref{Special Value_1}}][]

Replacing $z$ by $nz$ and $u$ by $u^n$ in \eqref{HZN}, one has
\begin{align}\label{z by nz}
\mathcal{F}(nz;u^n,v) &= \int_{0}^1 \frac{\log(1-u^nt^{nz})}{v^{-1}-t}\text{d}t.
\end{align}
We know that
\begin{align}\label{nth roots}
1-z^n = \prod_{\alpha^n = 1}(1-\alpha z).
\end{align}
Here the product runs over all the $n$th roots of unity.
Utilizing \eqref{nth roots} in \eqref{z by nz}, we see that
\begin{align*}
\mathcal{F}(nz;u^n,v)
&=\int_{0}^1\frac{\sum_{\alpha^n=1}\log(1-\alpha ut^z)}{v^{-1}-t}\text{d}t.
\end{align*}
On swapping the order of summation and integration, we directly get \eqref{Special Value_1}.
Again from \eqref{HZN}, we have
\begin{align}
\mathcal{F}\left(\frac{z}{n};u,v^n\right) &= \int_{0}^1 \frac{\log\left(1-ut^{\frac{z}{n}}\right)}{v^{-n}-t} \, \text{d}t. \nonumber 
\end{align}
Making the change of variable $t \rightarrow e^{-t}$ in the above equation, one obtains
\begin{align}
\mathcal{F}\left(\frac{z}{n};u,v^n\right) &= \int_{0}^{\infty}\frac{\log\left(1-ue^{\frac{-tz}{n}}\right)}{e^{t}v^{-n}-1} \,\text{d}t. \nonumber
\end{align}
Substituting $t=ny$, we see that 
\begin{align}
\mathcal{F}\left(\frac{z}{n};u,v^n\right) &=  n \int_{0}^{\infty}\frac{\log\left(1-ue^{-z y}\right)}{e^{ ny }v^{-n}-1} \,\text{d}y. \nonumber
\end{align}
Again, we change the variable as $y=-\log t$ to obtain 
\begin{align}
\mathcal{F}\left(\frac{z}{n};u,v^n\right) &=  n \int_{0}^{1}\frac{\log\left(1-u t^{z} \right)}{(vt)^{-n}-1} \, \frac{\text{d}t}{t}. \nonumber
\end{align}
Now using the relation 
\begin{align}\label{ABCD}
\frac{n}{1- Y^n}=\sum_{\beta^n=1}\frac{1}{1-\beta Y}, \quad \textrm{with}~~ Y=(vt)^{-1},
\end{align}
and upon simplification, we obtain
\begin{align}
\mathcal{F}\left(\frac{z}{n};u,v^n\right) = \sum_{\beta^n=1}\mathcal{F}(z;u,\beta v).\nonumber
\end{align}
This proves \eqref{Special Value_2}.
\end{proof}

\begin{proof}[Theorem \rm{\ref{ext J(z)}}][]
Substituting $z$ by $-z$ in $J(z)$, we get
\begin{align}
J(-z) &= \int_{0}^{1} \frac{\log(1+t^{-z})}{1+t}\text{d}t \nonumber \\
&= \int_{0}^{1} \frac{\log(1+t^{z})}{1+t}\text{d}t - \int_{0}^{1} \frac{\log(t^{z})}{1+t}\text{d}t \nonumber \\
&= \int_{0}^{1} \frac{\log(1+t^{z})}{1+t}\text{d}t - z\int_{0}^{1} \frac{\log(t)}{1+t}\text{d}t. \label{J(-z)}
\end{align}
It is easy to check that 
\begin{align}
\int_{0}^{1} \frac{\log(t)}{1+t}\text{d}t = \textup{Li}_2(-1) = -\frac{\pi^2}{12}. \label{Li(-1)}
\end{align}
Substituting \eqref{Li(-1)} in \eqref{J(-z)}, we get
\begin{align}
J(-z)&= J(z)- \textup{Li}_2(-1)z \nonumber\\
\Rightarrow J(-z)&=J(z)+\frac{\pi^2z}{12}. \nonumber
 \end{align}
This completes the proof Theorem \ref{ext J(z)}.
\end{proof}

\begin{proof}[Theorem \rm{\ref{Analytic Continuation_1}}][]
As $|u|<1$, $|v|<1$, and $|w|<1$, so using the series expansion of logarithm and $\frac{1}{1-vt}$ around $t=0$, we write 
\begin{align*}
\mathcal{F}(z;u,v,w) &= \int_{0}^{1} \sum_{m=1}^{\infty}\frac{(ut^z)^m}{m} \sum_{n=1}^{\infty}\frac{(wt^z)^n}{n} \sum_{k=0}^{\infty} v^{(k+1)}t^k \, \text{d}t \nonumber \\
&= \sum_{m=1}^{\infty}\frac{u^m}{m}\sum_{n=1}^{\infty}\frac{w^n}{n} \sum_{k=0}^{\infty} (v^{k+1})\frac{1}{(mz+zn+k+1)} \nonumber\\
&= \sum_{m=1}^{\infty}\sum_{n=1}^{\infty}\sum_{k=1}^{\infty} \frac{u^m w^n v^k}{mn(z(m+n)+k)}; \quad z \neq -\frac{p}{q}
  \,\,\, \text{for any}\hspace{1.2mm} p, q \in \mathbb{N}.
\end{align*}
This series represents an analytic function in $u,v,w$. It decays for the specified values of $u$, $v$, and $w$, suggesting that the series is uniformly convergent for any complex $z$ other than a negative rational number. 
 As a result, the function $\mathcal{F}(z;u,v,w)$ is analytic for any complex $z$ expect negative rational numbers.

\end{proof}

\begin{proof}[Theorem \rm{\ref{Dup: analogue}}][]
From \eqref{AHZN-1}, we see that
\begin{align}
\mathcal{F}\left(nz,u^n,v,w^n\right)&=\int_{0}^{1}\frac{\log\left(1-u^nt^{nz}\right)\log\left(1-w^nt^{nz}\right)}{v^{-1}-t}\text{d}t. \nonumber
\end{align}
Using \eqref{nth roots}, one obtains
\begin{align}
\mathcal{F}\left(nz,u^n,v,w^n\right) &=\int_{0}^{1} \frac{\sum_{\alpha^n=1}\log(1-u\alpha t^z)\sum_{\beta^n=1}\log(1-w\beta t^z)}{v^{-1}-t} \, \text{d}t. \nonumber
\end{align}
On interchanging summation and integration, we arrive
\begin{align}
 \mathcal{F}\left(nz,u^n,v,w^n\right) &=\sum_{\alpha^n=1}\sum_{\beta^n=1}\int_{0}^{1} \frac{\log(1-u\alpha t^z)\log(1-w\beta t^z)}{v^{-1}-t} \, \text{d}t, \nonumber \\  
 & =\sum_{\alpha^n=1}\sum_{\beta^n=1}\mathcal{F}\left(z;u\alpha,v,w\beta\right). \nonumber
\end{align}
Again use \eqref{AHZN-1} to see that
\begin{align}\label{eq_abc}
\mathcal{F}\left(\frac{z}{n};u,v^n,w\right) = \int_{0}^{1}\frac{\log\left(1-ut^\frac{z}{n}\right)\log\left(1-wt^\frac{z}{n}\right)}{v^{-n}-t}\text{d}t.
\end{align}
Making the change of variable  $t=e^{-t}$ in \eqref{eq_abc}, one obtains 
\begin{align*}
\mathcal{F}\left(\frac{z}{n};u,v^n,w\right) &= \int_{0}^{\infty}\frac{\log\left(1-ue^{\frac{-tz}{n}}\right)\log\left(1-we^{\frac{-tz}{n}}\right)}{e^{t}v^{-n}-1} \,\text{d}t.
 \end{align*}
Substituting $t=ny$ and using relation \eqref{ABCD}, we get
 \begin{align*}
 \mathcal{F}\left(\frac{z}{n};u,v^n,w\right) =\sum_{\beta^n=1}\mathcal{F}(z;u,\beta v,w).
 \end{align*}
This completes the proof of Theorem \ref{Dup: analogue}.
\end{proof}

\begin{proof}[Corollary \rm{\ref{anal at 1/n}}][]
Using multiplication formula \eqref{special Value_5} for $\mathcal{F}\left(\frac{1}{n};u,v^{n},u\right)$, we get
\begin{align}
\mathcal{F}\left(\frac{1}{n};u,v^{n},u\right)&=\sum_{\alpha^n=1}\mathcal{F}(1;u,\alpha v,u).\label{Col 2.5.1}
\end{align}
Employing \eqref{at u=w and any v} with $v$ being replaced by $\alpha v$, we have
\begin{align}
\mathcal{F}(1;u,\alpha v,u) &= -\log^2(1-u) \log\left(\frac{u(1-\alpha v)}{u-\alpha v}\right)-2\log(1-u) \textup{Li}_2 \left(\frac{\alpha v(u-1)}{u-\alpha v}\right)\nonumber\\
&+2\textup{Li}_3\left(\frac{\alpha v(u-1)}{u-\alpha v}\right)-2\textup{Li}_3 \left(\frac{\alpha v}{\alpha v-u}\right). \nonumber 
\end{align}
Substituting the above value of $\mathcal{F}(1;u,\alpha v,u)$ in \eqref{Col 2.5.1}, we get the desired result.

\end{proof}

\begin{proof}[Theorem \rm{\ref{for u=v=w}}][]
From the definition \eqref{AHZN-1}, we have
\begin{align}
\mathcal{F}\left(1;u,u,u\right)&=\int_{0}^1 \frac{ \log^2 \left( 1 - ut \right)}{ \frac{1}{u}-t}\text{d}t. \nonumber
\end{align}
On integrating, one gets
\begin{align*}
\mathcal{F}\left(1;u,u,u\right)&= -\log^3 \left( 1 - u \right)-2\mathcal{F}\left(1;u,u,u\right)\nonumber \\
\Rightarrow \mathcal{F}\left(1;u,u,u\right)&=-\frac{1}{3} \log^3 \left(1-u\right).\nonumber
\end{align*}
Hence the result is proved.
\end{proof}

\begin{proof}[Theorem \rm{\ref{Final lemma}}][]
We have 
\begin{align*}
\mathcal{F}(1;u, v, w) = \int_{0}^1 \frac{\log(1-ut) \log(1-wt)}{v^{-1}-t}\text{d}t.
\end{align*}
We now consider $\log(1-ut) \log(1-wt)$ as the first function and use integration by parts, along with the relation
$$\frac{\mathrm{d}}{\mathrm{d}t}\log\left(\frac{u(1-vt)}{u-v}\right) = -\frac{v}{1-vt}\text{d}t,$$ 
we see that
\begin{align}
\mathcal{F}(1;u,v,w) &= -\log (1-u)\log(1-w)\log\left(\frac{u(1-v)}{u-v}\right)+\int_{0}^1 \frac{u}{ut-1}\log\left(1-wt\right)\nonumber \\
&\times \log\left(\frac{u(1-vt)}{u-v}\right)\text{d}t+\int_{0}^1 \frac{w}{wt-1}\log\left(1-ut\right)\log\left(\frac{u(1-vt)}{u-v}\right)\text{d}t. \label{Lemma3 int 1}
\end{align}
We further use integration by parts to evaluate both integrals in the right hand side. We take $\log\left(1-wt\right)$ as the first function and $\frac{u}{ut-1}\log\left(\frac{u(1-vt)}{u-v}\right)$ as the second function in the first integral, whereas in the second integral, we take product of $\log$ terms as the first function and $\frac{w}{wt-1}$ as the second function. From \eqref{RBL}, we can see that
\begin{align*}
\frac{\mathrm{d}}{\mathrm{d}t} \operatorname{Li}_2\left(\frac{v(1-ut)}{v-u}\right) &= -\frac{u}{ut-1}\log \left(\frac{u(1-vt)}{u-v}\right) \\
\frac{\mathrm{d}}{\mathrm{d}t} \log\left(\frac{v(1-wt)}{v-w}\right) &= \frac{w}{wt-1}.
\end{align*}
Using the above two relations in \eqref{Lemma3 int 1}, we have
\begin{align}\label{FA}
\mathcal{F}(1;u,v,w) &= -\log\left(1-u\right)\log\left(1-w\right)\log\left(\frac{u(1-v)}{u-v}\right)-\log\left(1-w\right) \operatorname{Li}_{2}\left(\frac{v(1-u)}{v-u}\right)\nonumber \\
&+ \int_{0}^1 \frac{w}{wt-1}\operatorname{Li}_{2}\left(\frac{v(1-ut)}{v-u}\right)\text{d}t + \log\left(1-u\right)\log\left(\frac{u(1-v)}{u-v}\right)\log\left(\frac{v(1-w)}{v-w}\right)\nonumber \\
&-\int_{0}^1 \frac{v}{vt-1}\log(1-ut)\log\left(\frac{v(1-wt)}{v-w}\right)\text{d}t \nonumber\\
&- \int_{0}^1\frac{u}{ut-1}\log\left(\frac{u(1-vt)}{u-v}\right)\log\left(\frac{v(1-wt)}{v-w}\right)\text{d}t \nonumber \\
& = \log\left(1-u\right)\log\left(\frac{u(1-v)}{u-v}\right)\bigg[\log\left(\frac{v(1-w)}{v-w}\right)-\log\left(1-w\right)\bigg] \nonumber \\ 
& -\log\left(1-w\right) \operatorname{Li}_{2}\left(\frac{v(1-u)}{v-u}\right) + J(w, v, u) - S(u, v, w),
\end{align}
where 
 $J(w, v, u)$ is defined as in \eqref{J(u,v,w)}. We also define
\begin{align*}
S(u, v, w) & := \int_{0}^1 \frac{v}{vt-1}\log\left(\frac{v(1-wt)}{v-w}\right)\log(1-ut)\,\,\text{d}t \\
& + \int_{0}^1\frac{u}{ut-1}\log\left(\frac{u(1-vt)}{u-v}\right)\log\left(\frac{v(1-wt)}{v-w}\right)\text{d}t.
\end{align*}
Again using integration by parts, by taking log term as first function, $\frac{v}{vt-1}$ as second and making use of the relation
$$\frac{\mathrm{d}}{\mathrm{d}t} \log\left(\frac{w(1-vt)}{w-v}\right) = \frac{v}{vt-1}$$
in the first integral, we have
\begin{align*}
S(u, v, w) &= \log(1-u)\log\left(\frac{w(1-v)}{w-v}\right)\log\left(\frac{v(1-w)}{v-w}\right) \\
& -\int_{0}^1 \frac{u}{ut-1}\log\left(\frac{v(1-wt)}{v-w}\right)\log\left(\frac{w(1-vt)}{w-v}\right)\text{d}t \\
& -\int_{0}^1\frac{w}{wt-1}\log\left(\frac{w(1-vt)}{w-v}\right)\log(1-ut)\,\text{d}t \\ 
& + \int_{0}^1\frac{u}{ut-1}\log\left(\frac{u(1-vt)}{u-v}\right)\log\left(\frac{v(1-wt)}{v-w}\right)\text{d}t.
\end{align*}
First and third integrals can be combined and we solve the second integral by taking $z = \frac{v(1-wt)}{v-w}$ in \eqref{RBL} to have
$$\frac{\mathrm{d}}{\mathrm{d}t} \operatorname{Li}_2\left(\frac{v(1-wt)}{v-w}\right) = -\frac{w}{wt-1}\log\left(\frac{w(1-vt)}{w-v}\right).$$ 
Hence, we get
\begin{align}
S(u, v, w) &= \log(1-u)\log\left(\frac{w(1-v)}{w-v}\right)\log\left(\frac{v(1-w)}{v-w}\right) \nonumber\\
& -\log\left(\frac{w(u-v)}{u(w-v)}\right)\int_{0}^1\frac{u}{ut-1}\log\left(\frac{v(1-wt)}{v-w}\right)\text{d}t \nonumber\\
& + \log(1-u)\operatorname{Li}_{2}\left(\frac{v(1-w)}{v-w}\right)-\int_{0}^1\frac{u}{ut-1}\operatorname{Li}_{2}\left(\frac{v(1-wt)}{v-w}\right)\text{d}t \nonumber \\
&= \log(1-u)\log\left(\frac{w(1-v)}{w-v}\right)\log\left(\frac{v(1-w)}{v-w}\right) + \log(1-u)\operatorname{Li}_{2}\left(\frac{v(1-w)}{v-w}\right) \nonumber \\
& - \log\left(\frac{v(1-w)}{v-w}\right)\log\left(\frac{w(1-u)}{w-u}\right) - \log\left(\frac{w(u-v)}{u(w-v)}\right) \bigg[\log\left(\frac{v}{v-w}\right)\log\left(\frac{w}{w-u}\right) \nonumber \\
& + \operatorname{Li}_2 \left(\frac{u(1-w)}{u-w}\right) - \operatorname{Li}_2 \left(\frac{u}{u-w}\right)\bigg] - J(u, v, w) \label{S solved}
\end{align}
where the first integral is solved in \eqref{the last integral2} and $J(u, v, w)$ is defined as in \eqref{J(u,v,w)}. Putting the value of $S(u, v, w)$ \eqref{S solved} and using Lemma \ref{lemma 2} in \eqref{FA}, we have
{\allowdisplaybreaks
\begin{align}
\mathcal{F}(1;u,v,w) &= \log\left(1-u\right)\log\left(\frac{u(1-v)}{u-v}\right)\log\left(\frac{v}{v-w}\right) - \log\left(1-w\right) \operatorname{Li}_{2}\left(\frac{v(1-u)}{v-u}\right) \nonumber \\
&  - \log(1-u)\log\left(\frac{w(1-v)}{w-v}\right)\log\left(\frac{v(1-w)}{v-w}\right) - \log(1-u)\operatorname{Li}_{2}\left(\frac{v(1-w)}{v-w}\right) \nonumber \\
& - \log\left(\frac{v-u}{v-w}\right)\bigg[\operatorname{Li}_{2}\left(\frac{v-u}{v-w}\right)+ \operatorname{Li}_{2}\left(\frac{v}{v-u}\right)-\operatorname{Li}_{2}\left(\frac{v}{v-w}\right)-\operatorname{Li}_{2}\left(\frac{u}{w}\right) \nonumber \\
& -\log\left(\frac{w(u-v)}{u(w-v)}\right) \log\left(\frac{v}{v-w}\right)\bigg] \nonumber \\
& + \log\left(\frac{(v-u)(w-1)}{(v-w)(u-1)}\right)\bigg[\operatorname{Li}_{2}\bigg(\frac{(v-u)(w-1)}{(v-w)(u-1)}\bigg) +\operatorname{Li}_{2}\left(\frac{v(1-u)}{v-u}\right) -\operatorname{Li}_2\left(\frac{v(1-w)}{v-w}\right) \nonumber \\
& -\operatorname{Li}_{2}\left(\frac{u(1-w)}{w(1-u)}\right) -\log\left(\frac{w(u-v)}{u(w-v)}\right)\log\left(\frac{v(1-w)}{v-w}\right) \bigg] \nonumber \\
& +\frac{1}{2}\log\left(\frac{w(u-v)}{u(w-v)}\right)\bigg[ \log^{2} \left( \frac{v(1-w)}{v-w}\right)-\log^2\left(\frac{v}{v-w}\right)\bigg] \nonumber \\
& -\frac{1}{2}\bigg[\log\left(\frac{u(1-v)}{u-v}\right)\log^2\left(\frac{(v-u)(w-1)}{(v-w)(u-1)}\right)-\log^2\left(\frac{v-u}{v-w}\right)\log\left(\frac{u}{u-v}\right)\bigg] \nonumber \\
& + \frac{1}{2}\log^2 \left(\frac{(v-u)(w-1)}{(v-w)(u-1)}\right) \log\left(\frac{w(v-1)}{v-w}\right) - \frac{1}{2}\log^2\left(\frac{v-u}{v-w}\right) \log\left(\frac{w}{w-v}\right) \nonumber \\
& + \operatorname{Li}_3\left(\frac{v(1-w)}{v-w}\right)-\operatorname{Li}_3\left(\frac{v}{v-w}\right) +\operatorname{Li}_3\left(\frac{v(1-u)}{v-u}\right)-\operatorname{Li}_3\left(\frac{v}{v-u}\right) \nonumber \\
& + \operatorname{Li}_{3}\left(\frac{u(1-w)}{w(1-u)}\right) - \operatorname{Li}_{3}\left(\frac{(v-u)(w-1)}{(v-w)(u-1)}\right) - \operatorname{Li}_{3}\left(\frac{u}{w}\right) + \operatorname{Li}_{3}\left(\frac{v-u}{v-w}\right). \label{Rearranged eqn}
\end{align}}
We simplify some terms of the last equation. We use the below functional equation of dilogarithm function Li$_2 (z)$, due to Rogers \cite{Rogers1907}:
\begin{align}
\text{Li}_{2}(A) + \text{Li}_{2}(B) - \text{Li}_{2}(AB) &= \text{Li}_{2}\left(\frac{A-AB}{1-AB}\right) + \text{Li}_{2}\left(\frac{B-AB}{1-AB}\right) \nonumber \\
& + \log\left(\frac{1-A}{1-AB}\right)\log\left(\frac{1-B}{1-AB}\right). \label{dilog fn eqn}
\end{align}
We first take $(A, B) = \bigg(\frac{v-u}{v-w}, \frac{v}{v-u}\bigg)$ to write as
\begin{align}
\operatorname{Li}_{2}\left(\frac{v-u}{v-w}\right)+ \operatorname{Li}_{2}&\left(\frac{v}{v-u}\right)-\operatorname{Li}_{2}\left(\frac{v}{v-w}\right)-\operatorname{Li}_{2}\left(\frac{u}{w}\right) \nonumber \\
 &= \operatorname{Li}_{2}\left(\frac{v(w-u)}{w(v-u)}\right) - \log \left(\frac{w}{w-u}\right)\log \left(\frac{u(w-v)}{w(u-v)}\right). \label{dilog fn eq 1}
\end{align} 
We also have the following terms in \eqref{Rearranged eqn}:
{\allowdisplaybreaks
\begin{align}
L(u, v, w) & := \frac{1}{2}\log\left(\frac{w(u-v)}{u(w-v)}\right)\bigg[ \log^{2} \left( \frac{v(1-w)}{v-w}\right)-\log^2\left(\frac{v}{v-w}\right)\bigg] \nonumber \\
& -\frac{1}{2}\bigg[\log\left(\frac{u(1-v)}{u-v}\right)\log^2\left(\frac{(v-u)(w-1)}{(v-w)(u-1)}\right)-\log^2\left(\frac{v-u}{v-w}\right)\log\left(\frac{u}{u-v}\right)\bigg] \nonumber \\
& + \frac{1}{2}\log^2 \left(\frac{(v-u)(w-1)}{(v-w)(u-1)}\right) \log\left(\frac{w(v-1)}{v-w}\right) - \frac{1}{2}\log^2\left(\frac{v-u}{v-w}\right) \log\left(\frac{w}{w-v}\right) \nonumber \\
&= \frac{1}{2}\log\left(\frac{w(u-v)}{u(w-v)}\right)\bigg[ \log^{2} \left( \frac{v(1-w)}{v-w}\right)-\log^2\left(\frac{v}{v-w}\right)\bigg] \nonumber \\
& + \frac{1}{2}\log^2 \left(\frac{(v-u)(w-1)}{(v-w)(u-1)}\right)\bigg[\log\left(\frac{w(1-v)}{w-v}\right) - \log\left(\frac{u(1-v)}{u-v}\right)\bigg] \nonumber \\
& -\frac{1}{2}\log^2\left(\frac{v-u}{v-w}\right)\bigg[\log\left(\frac{w}{w-v}\right) - \log\left(\frac{u}{u-v}\right)\bigg] \nonumber \\
& = \frac{1}{2}\log\left(\frac{w(u-v)}{u(w-v)}\right)\bigg[\log^{2} \left( \frac{v(1-w)}{v-w}\right) + \log^2\left(\frac{(v-u)(w-1)}{(v-w)(u-1)}\right) \nonumber \\
& - \log^2\left(\frac{v}{v-w}\right) - \log^2\left(\frac{v-u}{v-w}\right)\bigg]. \label{L}
\end{align}}
 We also use $a^2 + b^2 = (a-b)^2 + 2ab$ for the $\log^2$ terms to see that 
\begin{align}
&\log^{2} \left( \frac{v(1-w)}{v-w}\right) + \log^2\left(\frac{(v-u)(w-1)}{(v-w)(u-1)}\right) - \log^2\left(\frac{v}{v-w}\right) - \log^2\left(\frac{v-u}{v-w}\right) \nonumber \\
& =  \log^{2} \left( \frac{u-v}{v(u-1)}\right) + 2\log\left( \frac{v(1-w)}{v-w}\right)\log\left(\frac{(v-u)(w-1)}{(v-w)(u-1)}\right) - \log^2\left(\frac{v}{v-u}\right) \nonumber \\
& - 2\log\left(\frac{v}{v-w}\right)\log\left(\frac{v-u}{v-w}\right). \label{log sq eqn}
\end{align}
Putting the value of \eqref{log sq eqn} in \eqref{L} and then using \eqref{dilog fn eq 1} and the value of $L(u, v, w)$ \eqref{L} in \eqref{Rearranged eqn}, we get
{\allowdisplaybreaks 
\begin{align*}
& \mathcal{F}(1;u,v,w) \nonumber\\
&= \log\left(1-u\right)\log\left(\frac{u(1-v)}{u-v}\right)\log\left(\frac{v}{v-w}\right) - \log\left(1-w\right) \operatorname{Li}_{2}\left(\frac{v(1-u)}{v-u}\right) \nonumber \\
&  - \log(1-u)\log\left(\frac{w(1-v)}{w-v}\right)\log\left(\frac{v(1-w)}{v-w}\right) - \log(1-u)\operatorname{Li}_{2}\left(\frac{v(1-w)}{v-w}\right) \nonumber \\
& - \log\left(\frac{v-u}{v-w}\right)\bigg[\operatorname{Li}_{2}\left(\frac{v(w-u)}{w(v-u)}\right) - \log \left(\frac{w}{w-u}\right)\log \left(\frac{u(w-v)}{w(u-v)}\right) \nonumber \\
& -\log\left(\frac{w(u-v)}{u(w-v)}\right) \log\left(\frac{v}{v-w}\right)\bigg] + \log\left(\frac{(v-u)(w-1)}{(v-w)(u-1)}\right)\bigg[\operatorname{Li}_{2}\bigg(\frac{(v-u)(w-1)}{(v-w)(u-1)}\bigg)  \nonumber \\
& +\operatorname{Li}_{2}\left(\frac{v(1-u)}{v-u}\right) -\operatorname{Li}_2\left(\frac{v(1-w)}{v-w}\right) -\operatorname{Li}_{2}\left(\frac{u(1-w)}{w(1-u)}\right) \nonumber \\
& -\log\left(\frac{w(u-v)}{u(w-v)}\right)\log\left(\frac{v(1-w)}{v-w}\right) \bigg] + \frac{1}{2}\log\left(\frac{w(u-v)}{u(w-v)}\right)\bigg[\log^{2} \left( \frac{u-v}{v(u-1)}\right) \nonumber \\
& - \log^2\left(\frac{v}{v-u}\right) 
  - 2\log\left(\frac{v}{v-w}\right)\log\left(\frac{v-u}{v-w}\right) + 2\log\left( \frac{v(1-w)}{v-w}\right)\log\left(\frac{(v-u)(w-1)}{(v-w)(u-1)}\right) \bigg] \nonumber\\
& + \operatorname{Li}_3\left(\frac{v(1-w)}{v-w}\right)-\operatorname{Li}_3\left(\frac{v}{v-w}\right) +\operatorname{Li}_3\left(\frac{v(1-u)}{v-u}\right)-\operatorname{Li}_3\left(\frac{v}{v-u}\right) \nonumber \\
& + \operatorname{Li}_{3}\left(\frac{u(1-w)}{w(1-u)}\right) - \operatorname{Li}_{3}\left(\frac{(v-u)(w-1)}{(v-w)(u-1)}\right) - \operatorname{Li}_{3}\left(\frac{u}{w}\right) + \operatorname{Li}_{3}\left(\frac{v-u}{v-w}\right).
\end{align*}
We simplify further to obtain
\begin{align*}
& \mathcal{F}(1;u,v,w) \nonumber\\
& = \log\left(1-u\right)\log\left(\frac{u(1-v)}{u-v}\right)\log\left(\frac{v}{v-w}\right) - \log\left(1-w\right) \operatorname{Li}_{2}\left(\frac{v(1-u)}{v-u}\right) \nonumber \\
&  - \log(1-u)\log\left(\frac{w(1-v)}{w-v}\right)\log\left(\frac{v(1-w)}{v-w}\right) - \log(1-u)\operatorname{Li}_{2}\left(\frac{v(1-w)}{v-w}\right) \nonumber \\
& - \log\left(\frac{v-u}{v-w}\right)\bigg[\operatorname{Li}_{2}\left(\frac{v(w-u)}{w(v-u)}\right) - \log \left(\frac{w}{w-u}\right)\log \left(\frac{u(w-v)}{w(u-v)}\right) \bigg] \nonumber \\
& + \log\left(\frac{(v-u)(w-1)}{(v-w)(u-1)}\right)\bigg[\operatorname{Li}_{2}\bigg(\frac{(v-u)(w-1)}{(v-w)(u-1)}\bigg) + \operatorname{Li}_{2}\left(\frac{v(1-u)}{v-u}\right) \nonumber \\
& -\operatorname{Li}_2\left(\frac{v(1-w)}{v-w}\right) -\operatorname{Li}_{2}\left(\frac{u(1-w)}{w(1-u)}\right)\bigg] \nonumber \\ 
& + \frac{1}{2}\log\left(\frac{w(u-v)}{u(w-v)}\right)\bigg[\log^{2} \left( \frac{u-v}{v(u-1)}\right) - \log^2\left(\frac{v}{v-u}\right)\bigg] \nonumber \\
& + \operatorname{Li}_3\left(\frac{v(1-w)}{v-w}\right)-\operatorname{Li}_3\left(\frac{v}{v-w}\right) +\operatorname{Li}_3\left(\frac{v(1-u)}{v-u}\right)-\operatorname{Li}_3\left(\frac{v}{v-u}\right) \nonumber \\
& + \operatorname{Li}_{3}\left(\frac{u(1-w)}{w(1-u)}\right) - \operatorname{Li}_{3}\left(\frac{(v-u)(w-1)}{(v-w)(u-1)}\right) - \operatorname{Li}_{3}\left(\frac{u}{w}\right) + \operatorname{Li}_{3}\left(\frac{v-u}{v-w}\right).
\end{align*}}
This finishes the proof of Theorem \ref{Final lemma}.
\end{proof}

\begin{proof}[Theorem \rm{\ref{val at rational for anlo}
}][]
Employing second part \eqref{special Value_5} of Theorem \ref{Dup: analogue}, we have
\begin{align*}
\mathcal{F}\left(\frac{p}{q};u,v,w\right)=\sum_{\alpha^q=1}\mathcal{F}(p;u,v^\frac{1}{q}\alpha,w).
\end{align*}
Again using \eqref{special Value_4} for $\mathcal{F}(p;u,v^\frac{1}{q}\alpha,w)$, we see that
\begin{align*}
\mathcal{F}\left(\frac{p}{q};u,v,w\right)&=\sum_{\alpha^q=1} \sum_{\beta^p=1} \sum_{\gamma^p=1} \mathcal{F}(1;u^\frac{1}{p}\beta,v^\frac{1}{q}\alpha,w^\frac{1}{p}\gamma),
\end{align*}
where $\mathcal{F}(1;u^\frac{1}{p}\beta,v^\frac{1}{q}\alpha,w^\frac{1}{p}\gamma)$ can be evaluated from Theorem \ref{Final lemma}.
\end{proof}

\begin{proof}[Corollary \rm{\ref{anal at natural}}][]
By putting $(p, q) = (n, 1)$ and $(p, q) = (1, n)$ in Theorem \ref{val at rational for anlo}, one can directly get the result. 
\end{proof}

\begin{proof}[Theorem \rm{\ref{theorem:analytic_continuation}}][]
Using $\frac{1}{1-vt}$ as the series expansion around $t=0$ and expanding the logarithm as series in \eqref{AHZN}, we get
\begin{align*}
\mathcal{F}_{k}(z;u,v) &= \int_{0}^{1}\left( \sum_{m=1}^{\infty}\frac{(ut^z)^m}{m}\right)^k \sum_{l=0}^{\infty} v^{l+1}t^l \, \text{d}t  \\
&=\int_{0}^{1} \sum_{m_1=1}^{\infty}\frac{(ut^z)^{m_1}}{m_1}\sum_{m_2=1}^{\infty}\frac{(ut^z)^{m_2}}{m_2} \cdots \sum_{m_k=1}^{\infty}\frac{(ut^z)^{m_k}}{m_k} \sum_{l=0}^{\infty} v^{l+1} t^l \text{d}t. 
\end{align*}
On switching the integration and summation order,  it gives
\begin{align*}
\mathcal{F}_{k}(z;u,v) &= \mathop{\prod_{i=1}^{k}\sum_{m_i=1}^{\infty}\sum_{l=0}^{\infty}}\frac{u^{m_i}}{m_i} v^{l+1}\int_{0}^{1} t^{zm_1+zm_2+\cdots +zm_k+l}dt,\\
&= \mathop{\prod_{i=1}^{k}\sum_{m_i=1}^{\infty}\sum_{l=0}^{\infty}}\frac{u^{m_i}}{m_i} \frac{ v^{l+1}}{(z(m_1+m_2+m_3+ \cdots +m_k)+l+1)},\\
&=  \mathop{\prod_{i=1}^{k}\sum_{m_i=1}^{\infty}\sum_{l=1}^{\infty}}\frac{u^{m_i}}{m_i}  \frac{v^{l}}{(z(m_1+m_2+m_3+ \cdots +m_k)+l)},
\end{align*} 
which is absolutely convergent for $|u|<1$ and $|v|<1$, and for any complex $z$ except for negative rational numbers.
\end{proof}

\begin{proof}[Theorem \rm{\ref{theorem:duplication_formula}}][]
From \eqref{AHZN}, we have
\begin{align}
\mathcal{F}_k\left(\frac{z}{n};u,v^n\right) = \int_{0}^{1}\frac{\log^k\left(1-ut^\frac{z}{n}\right)}{v^{-n}-t}dt.
\end{align}
Changing the variable $t$ by $e^{-t}$ in the above equation yields, 
\begin{align*}
\mathcal{F}_k\left(\frac{z}{n};u,v^n\right) &= \int_{0}^{\infty}\frac{\log^k\left(1-ue^{\frac{-tz}{n}}\right)}{e^{t}v^{-n}-1} \,dt.
 \end{align*}
 Making the change of variable $t=ny$ and using the relation \eqref{ABCD}
in the above equation, one obtains
 \begin{align*}
 \mathcal{F}_k\left(\frac{z}{n};u,v^n\right) =\sum_{\beta^n=1}\mathcal{F}_k(z;u,\beta v).
 \end{align*}
 This completes the proof of Theorem \ref{theorem:duplication_formula}.
\end{proof}

\begin{proof}[Theorem \rm{\ref{GEN at 1/n}}][]
From multiplication formula \eqref{special Value_3},  we have
\begin{align}
\mathcal{F}_k\left(\frac{1}{n};u,v\right)&=\sum_{\beta^n=1}
\mathcal{F}_k(1;u, \beta v^\frac{1}{n}).\nonumber
\end{align}
Substituting value of $\mathcal{F}_k(1;u,\beta v^\frac{1}{n})$ from Lemma \ref{f11}, one can directly obtain the result.
\end{proof}

\begin{proof}[Corollary \rm{\ref{Particular for GEN at 1/n}}][]
Substituting $k=1$ in Theorem \ref{GEN at 1/n}, we have
\begin{align}
\mathcal{F}_1\left(\frac{1}{n};u,v\right) &= \sum_{\beta^n = 1} \bigg\{\log(1-u) \textup{Li}_1 \left(\frac{\beta v^{\frac{1}{n}}(u-1)}{u - \beta v^{\frac{1}{n}}} \right) - \textup{Li}_2 \left(\frac{\beta v^{\frac{1}{n}}(u-1)}{u - \beta v^{\frac{1}{n}}} \right) \nonumber \\
& + \textup{Li}_2 \left(\frac{\beta v^{\frac{1}{n}}}{\beta v^{\frac{1}{n}} - u}\right)\bigg\}.
\end{align}
One can see that $\mathcal{F}_1\left(z;u,v\right) = \mathcal{F}\left(z;u,v\right)$. Now we state the following Abel's identity \cite[Eq.~(1.16)]{Lewin}, which is also equivalent to Roger's identity \eqref{dilog fn eqn}:
\begin{align*}
\operatorname{Li}_{2}\left(\frac{x}{1-y}\right) + \operatorname{Li}_{2}\left(\frac{y}{1-x}\right) &- \operatorname{Li}_{2}\left(\frac{xy}{(1-x)(1-y)}\right) \\
& = \operatorname{Li}_{2}\left(x\right) + \operatorname{Li}_{2}\left(y\right) + \log(1-x)\log(1-y).
\end{align*}
Use the above identity with $x=u$, $y = \frac{\beta v^{\frac{1}{n}}(u-1)}{u - \beta v^{\frac{1}{n}}}$ and simplify to get \eqref{RC at 1/n}.
\end{proof}

\begin{proof}[Theorem \ref{u=v and x=1}][]
From the definition \eqref{AHZN} of $\mathcal{F}_k\left(z;u,v\right)$, we have
\begin{align}\label{dk}
\mathcal{F}_k\left(1;u,u\right)= \int_{0}^1 \frac{\log^k(1-ut)}{u^{-1}-t} \mathrm{d}t. 
\end{align}
Note that $$
\frac{1}{k+1}\frac{\textrm{d}}{\textrm{d}t} \log^{k+1}(1-ut)= - \frac{\log^k(1-ut)}{u^{-1}-t}.
$$
Employing the above identity in \eqref{dk}, one can easily get the desired result. 
\end{proof}

\section{Concluding Remarks}
Throughout this paper, our primary focus has been to study the properties of the functions $\mathcal{F}(z;u,v,w)$ and $\mathcal{F}_k(z;u,v)$, defined in \eqref{AHZN-1} and \eqref{AHZN}.
Mainly, we have established multiplication formula, analytic continuation and special values at rational arguments of these functions. It is quite challenging to obtain all the similar properties for our functions that $\mathcal{F}(z;u,v)$ satisfies.  Investigating other properties of these functions would be a compelling direction for future research.

The Table \ref{Table of main theorem} below illustrates some special values of the function $\mathcal{F}_k(z;u,v)$ evaluated from Theorem \ref{GEN at 1/n}.
\newpage
\begin{table}[h]
\caption{Special values of $\mathcal{F}_{k}\left(\frac{1}{n};u,v\right)$}
		\label{Table of main theorem}
		\renewcommand{\arraystretch}{1}
		{
\begin{tabular}{|l|l|l|l|}	
\hline
$\left(k, n\right)$ & $(u,v)$  & Value of $\mathcal{F}_{k}\left(\frac{1}{n};u,v\right)$  \\
\hline  
$(1, 1)$ & $\left(\frac{1}{2}, -1\right)$  & $\log{(2)} \log{\left(\frac{2}{3}\right)}-\text{Li}_{2}\left(\frac{1}{3}\right)+\text{Li}_{2}\left(\frac{2}{3}\right)$\\
\hline
$(1, 1)$ & $\left(\frac{1}{3}, -1\right)$  & $\log{(2)} \log{\left(\frac{2}{3}\right)} -\text{Li}_{2}\left(\frac{1}{2}\right)+\text{Li}_{2}\left(\frac{3}{4}\right)$\\
\hline
$(1,1)$ & $\left(1, \frac{1}{4}\right)$  & $\text{Li}_{2}\left(-\frac{1}{3}\right)$\\      
\hline
$(1, 1)$  &$(-4, -2)$  & $ -\log(5) \log(6) -\text{Li}_{2}(-5)  - \frac{\pi^2}{12}  $\\
\hline
$(2, 2)$ 	& $ \left(1, \frac{1}{9} \right)$  & $-2 \left[ \text{Li}_{3}\left(-\frac{1}{2}\right) + \text{Li}_{3}\left(\frac{1}{4}\right) \right]$\\
\hline
$(2, 1)$ 	& $\left(-2, -3\right)$  & $-\log^2(3) \log (8) - i \pi\log^2(3) - 2\log(3) \text{Li}_{2}(9) + 2 \text{Li}_{3}(9)-2 \text{Li}_{3}(3)  $\\
\hline
$(1, 2)$ & $\left(1,-1\right)$ & $\text{Li}_{2}\left(\frac{i}{i-1}\right) + \text{Li}_{2}\left(\frac{i}{i+1}\right)$\\
\hline
$(2, 1)$	& $(1, -1)$  & $-2\, \text{Li}_{3}\left(\frac{1}{2}\right)$\\
\hline
\end{tabular}}

\end{table}	

\section{Acknowledgements}
The authors want to thank the anonymous referee for carefully reading the manuscript.
The first author’s research is supported by the Prime Minister Research Fellowship (PMRF), Govt. of India, Grant
No. 2102227. The second author wants to thank Science and Engineering Research Board (SERB), India, for giving MATRICS grant (File No. MTR/2022/000545) and SERB CRG grant (File No. CRG/CRG/2023/002122).

\end{document}